\providecommand{\U}[1]{\protect\rule{.1in}{.1in}}
\theoremstyle{plain}
\newtheorem{theorem}{Theorem}[section]
\newtheorem{lemma}[theorem]{Lemma}
\newtheorem{remark}[theorem]{Remark}
\newtheorem{corollary}[theorem]{Corollary}
\newtheorem{proposition}[theorem]{Proposition}
\newtheorem{question}[theorem]{Question}
\def\CC{\mathbb C}
\def\U{\mathcal U}
\def\KK{\mathbb K}
\def\bi{\mathbf i}
\newcommand{\veps}{\varepsilon}
\begin{document}
	
	\title[On coincidence results for summing multilinear operators]{On coincidence results for summing multilinear operators: interpolation, $\ell_1$-spaces and cotype}

	\author{F. Bayart, D. Pellegrino, P. Rueda}
	\thanks{F. Bayart was partially supported by the grant ANR-17-CE40-0021 of the French National Research Agency ANR (project Front). D. Pellegrino was partially supported by the Réseau Franco-Brésilien en Mathématiques. P. Rueda is supported by Ministerio de Econom\'{\i}a, Industria y Competitividad and FEDER under project MTM2016-77054-C2-1-P. }
%	\maketitle

	\begin{abstract}
Grothendieck's theorem asserts that every continuous linear operator from $\ell_1$ to $\ell_2$ is absolutely $(1,1)$-summing. This kind of result is commonly called coincidence result. In this paper we investigate coincidence results in the multilinear setting, showing how the cotype of the spaces involved affect such results. The special role played by $\ell_1$ spaces is also investigated with relation to interpolation of tensor products. In particular, an open problem on the interpolation of $m$ injective tensor products is solved.
	\end{abstract}

	\maketitle

\tableofcontents

\section{Introduction}

We start  from $m\geq 1$, $X_1,\dots,X_m$, $Y$ Banach spaces over $\mathbb{K}=\mathbb{R}$ or $\mathbb{C}$ and $T:X_1\times\cdots\times X_m\to Y$ $m$-linear. When explicitly said, we will just work with complex Banach spaces. Let also $\Lambda\subset\mathbb N^m$. For $r\in(0,+\infty)$ and $p\geq 1$, we say that $T$ is $\Lambda-(r,p)-$summing if there exists a constant $C>0$ such that for all sequences $x(j)\subset X_j^{\mathbb N}$, $1\leq j\leq m$, 
$$\left(\sum_{\bi\in\Lambda}\|T(x_\bi)\|^r\right)^{\frac 1r}\leq Cw_{p}(x(1))\cdots w_{p}(x(m))$$
where $T(x_\bi)$ stands for $T(x_{i_1}(1),\dots,x_{i_m}(m))$ and $\omega_p(x)$ stands for the weak $\ell^p$-norm of $x$ defined by 
\begin{eqnarray*}
 \omega_p(x)&=&\sup_{\|x^*\|\leq 1}\left(\sum_{i=1}^{+\infty} |x^*(x_i)|^p\right)^{\frac 1p}.
\end{eqnarray*}
The least constant $C$ for which the inequality holds is denoted by $\pi_{r,p}^{\Lambda}(T)$ and the class of all $\Lambda-(r,p)-$summing multilinear maps from $X_1\times\cdots\times X_m$ to $Y$ will be denoted by $\Pi_{r,p}^{\Lambda}(^mX_1,\dots,X_m;Y)$.

When $\Lambda=\mathbb N^m$, we recover the notion of a $(r,p)-$multiple summing map introduced in \cite{BPV04} and \cite{Mat03} and we shall denote by $\Pi_{r,p}^{mult}$ the corresponding class.
When $\Lambda=\{(n,\dots,n);\ n\in\mathbb N\}$, we get the definition of a $(r,p)$-absolutely summing map which was introduced in \cite{AM}. 
We shall denote by $\Pi_{r,p}^{abs}$ this class.

Grothendieck's famous theorem for absolutely summing linear operators asserts that every continuous linear operator from $\ell_{1}$ to $\ell_{2}$ is absolutely $(1,1)$-summing. This is a coincidence result where $\ell_1$ and $\ell_2$ play a very special role. Indeed, it was shown by Lindenstrauss and Pelczynski \cite{LP68} that  if $F$ is an infinite dimensional Banach space and $E$ is an infinite 
dimensional Banach space with unconditional Schauder basis, and if every continuous linear operator from $E$ to $F$ is absolutely $(1,1)$-summing,
then $E=\ell_{1}$ and $F$ is necessarily a Hilbert space. 

Grothendieck's theorem was extended by Kwapie\'n in \cite{Kwa68} replacing $\ell_2$ by $\ell_p$. More precisely, Kwapie\'n shows there that any operator $T:\ell_{1}\to\ell_p$ is $(r,1)$-summing, with $1/r = 1 - | 1/p - 1/2|$ and that this value of $r$ is optimal. The proof of this theorem is particularly interesting because it seems to be the first time that interpolation is used in the summability theory of operators.

Our aim in this paper is to give new coincidence results for multilinear maps, focusing on general methods to obtain such results. For instance, in \cite{PGVil03}, the following multilinear extension of Grothendieck's theorem is given: any $m$-linear form $\ell_1\times\cdots\times\ell_1\to \mathbb C$ is multiple $(1,1)$-summing. We will extend this result by showing that any $m$-linear form $\ell_1\times\cdots\times\ell_1\times Z\to\mathbb C$ with $Z$ a cotype 2 space is multiple $(1,1)$-summing and that this result is optimal:  if $q_Z:=\inf\{q;\ Z\textrm{ has cotype }q\}>2$, then there exists $T:\ell_1\times\cdots\times\ell_1\times Z\to\mathbb C$ which is not $(1,1)$-summing.

The paper is organized as follows. In Section \ref{sec:interpolation} we develop a general result for interpolation of coincidence results of multilinear maps defined on a product of $\ell_1$-spaces.
We deduce from this result an improvement of a theorem of \cite{DPS10}, which is a multilinear version of Kwapie\'n theorem. These interpolation results depend on the possibility to interpolate injective
tensor products. Results are known for the tensor products of two spaces (see \cite{Ko} and \cite{Mic}) but almost nothing is known about the interpolation of the tensor products of $m$ spaces
with $m\geq 3$. We then give a negative answer to a question asked in \cite{GMM16}, showing for instance that the interpolating space of $\otimes_{m,\veps}\ell_1$ and $\otimes_{m,\veps}\ell_2$, $m\geq 3$,
is not the expected $\otimes_{m,\veps}\ell_p$.
In Section \ref{sec:grothendieck}, we will prove that any $m$-linear form $\ell_1\times\cdots\times\ell_1\times Z\to\mathbb C$ with $Z$ a cotype 2 space is multiple $(1,1)$-summing. This will need another specific property of $\ell_1$.  Section \ref{sec:cotype} is devoted to the influence of cotype in the theory of summing multilinear maps. We shall show that conditions on the cotype of the ambient spaces give restrictions on the possible values of $r$ such that any multilinear map is $\Lambda-(r,1)-$summing. In particular, we  improve several results of various authors.

 Throughout the paper, we will use the following notations. Given a Banach space $Z$, $q_Z$ will mean $\inf\{2\leq q\leq \infty: Z \mbox{ has cotype } q\}$. Henceforth if $p\in[1,\infty]$ we denote by $p^{\ast}$ its conjugate exponent. 
We will also consider the spaces
$$
\ell_p^{w}(X)=\left\{ (x_n)_{n=1}^\infty\subset X: \omega_p((x_n)_{n=1}^\infty)=\sup_{\|x^*\|\leq 1}\left(\sum_{i=1}^{+\infty} |x^*(x_i)|^p\right)^{\frac 1p}<\infty\right\}, \
$$
and
$$
\ell_p^{w,0}(X)=\Big\{ (x_n)_{n=1}^\infty\in \ell_p^w(X): \omega_p((x_n)_{n=N}^\infty) \to 0 \mbox{ for } N\to\infty\Big\}.
$$

%%
%%\textcolor{blue}{Another concept closely related to the theory of absolutely summing operators is the notion of cotype. We recall that if $2\leq q\leq\infty$ and $\left(  r_{j}\right)  _{j=1}^{\infty}$
%%are the Rademacher functions, then $X$ has cotype $q$ if there exists
%%$C_{q}\left(  X\right)  \geq0$ such that, for every $k\in\mathbb{N}$ and}
%%$x_{1},\ldots,x_{k}\in X$,%
%%\[
%%\left(  \overset{k}{\underset{j=1}{%
		%TCIMACRO{\dsum }%
		%BeginExpansion
%%		{\displaystyle\sum}
		%EndExpansion
%%}}\left\Vert x_{j}\right\Vert ^{q}\right)  ^{1/q}\leq C_{q}\left(  X\right)
%\left(
%TCIMACRO{\dint _{0}^{1}}%
%BeginExpansion
%{\displaystyle\int_{0}^{1}}
%EndExpansion
%\left\Vert \overset{k}{\underset{j=1}{%
		%TCIMACRO{\dsum }%
		%BeginExpansion
%		{\displaystyle\sum}
		%EndExpansion
%}}r_{j}\left(  t\right)  x_{j}\right\Vert ^{2}dt\right)  ^{1/2}\text{.}%
%\]

%\textcolor{blue}{Cotype and absolutely summing operators are linked by the following classical result due to Talagrand \cite{Ta92}:}

\section{Interpolation of tensor products and coincidence results}\label{sec:interpolation}

\subsection{Interpolation of coincidence results} 
To establish a general theorem that provides coincidence results for interpolated spaces, we start by introducing the following property, which plays a central role in our work. Let $0<\theta<1$. We will say that a family of interpolation pairs of complex Banach spaces $\big((X_0(1),X_1(1)),\ldots,(X_0(m),X_1(m));(Y_0,Y_1)\big)$  has the {\it  injective $\theta$-property} if 
$$
[X_0(1)^*\widehat\otimes_\varepsilon \cdots \widehat\otimes_\varepsilon  X_0(m)^*\widehat\otimes_\varepsilon Y_0,X_1(1)^*\widehat\otimes_\varepsilon \cdots \widehat\otimes_\varepsilon X_1(m)^*\widehat\otimes_\varepsilon Y_1]_\theta =
$$
$$
[X_0(1)^*, X_1(1)^*]_\theta \widehat\otimes_\varepsilon \cdots \widehat\otimes_\varepsilon[ X_0(m)^*, X_1(m)^*]_\theta \widehat\otimes_\varepsilon [Y_0, Y_1]_\theta.
$$
 For $\theta\in ]0,1[$, we write $Y_\theta=[Y_0,Y_1]$ and  $X_\theta(j)=[X_0(j),X_1(j)]_\theta$, $j=1,\ldots,m$. 

Note that,  if $Y_0=Z_0^*$ and $Y_1=Z_1^*$ are dual Banach spaces, then the family of interpolation pairs $\big((X_0(1),X_1(1)),\ldots,(X_0(m),X_1(m));(Y_0,Y_1)\big)$ has the injective $\theta$-property if and only if $\big((X_0(1),X_1(1)),\ldots,(X_0(m),X_1(m)),(Z_0,Z_1);(\mathbb C,\mathbb C)\big)$ has it.

Kouba \cite{Ko} studied the interpolation of $2$-fold injective tensor products of complex Banach spaces and gave sufficient conditions for the interpolation pairs $(X_0,X_1),(Y_0,Y_1)$ to fulfill  $[X_0\widehat\otimes_\varepsilon Y_0,X_1\widehat\otimes_\varepsilon Y_1]_\theta \simeq [X_0,X_1]_\theta \widehat\otimes_\varepsilon [Y_0,Y_1]$. 
By \cite[Theorem 4.2]{Ko} the family $\big((X_0,X_1),(Y_0,Y_1);(\mathbb C,\mathbb C)\big)$   satisfies the injective $\theta$-property whenever 
\begin{enumerate}
\item $X_0,X_1,Y_0,Y_1$ are type  $2$ spaces or 
\item $X_0,X_1$ are type $2$ spaces and $Y_0^*,Y_1^*$ are $2$-concave Banach lattices or
\item $X_0^*,X_1^*,Y_0^*,Y_1^*$ are $2$-concave Banach lattices.
\end{enumerate}
In particular, if $0<\theta<1$ and $1\leq p_0,p_1,q_0,q_1\leq 2$,  $\frac{1}{p_\theta}=\frac{1-\theta}{p_0}+\frac{\theta}{p_1}, \frac{1}{q_\theta}=\frac{1-\theta}{q_0}+\frac{\theta}{q_1}$,  it follows that 
$$
[\ell_{p_0}\widehat\otimes_\varepsilon \ell_{q_0},\ell_{p_1}\widehat\otimes_\varepsilon \ell_{q_1}]_\theta=\ell_{p_\theta}\widehat\otimes_\varepsilon \ell_{q_\theta};
$$
in our words, the families $\big((\ell_{p_0^*},\ell_{p_1^*}),(\ell_{q_0^*},\ell_{q_1^*});(\mathbb C,\mathbb C)\big)$ or $\big((\ell_{p_0^*},\ell_{p_1^*});(\ell_{q_0},\ell_{q_1})\big)$ have the injective $\theta$-property for all $0<\theta<1$. In Theorem \ref{thm:interpolationtensor} we will see that the family $\big( (\ell_{p_0^*},\ell_{p_1^*}),\stackrel{m}{\ldots},(\ell_{p_0^*},\ell_{p_1^*});(\mathbb C,\mathbb C)\big)$ does not have the injective $\theta$-property for any $\theta\in ]0,1[$ whenever $m\geq 3$ and $1< p_0<m^*<p_1$.

\begin{theorem}\label{genint} Let $m\geq 1$, $\Lambda\subset\mathbb N^m$, $1\leq p_0,p_1\leq 2$ and  for $i=0,1$, $r_i\geq 1$.  Let $0<\theta<1$ and let $(X_0(1),X_1(1)),\ldots,(X_0(m),X_1(m)),(Y_0,Y_1)$ be  interpolation pairs of complex Banach spaces such that $\big((X_0(1),X_1(1)),\ldots,(X_0(m),X_1(m));(Y_0,Y_1)\big)$ and  $\big((\ell_{p_0^*},\ell_{p_1^*});(X_0(j),X_1(j))\big)$, $j=1,\ldots,m$,  satisfy the injective $\theta$-property. Denote by  $r_\theta$, $p_\theta$ the real numbers
$$\frac{1}{r_\theta}=\frac{1-\theta}{r_0}+\frac{\theta}{r_1},\ \frac{1}{p_\theta}=\frac{1-\theta}{p_0}+\frac{\theta}{p_1}.$$ 
  If
$$
\Pi_{r_0,p_0}^{\Lambda}(^mX_0(1),\dots,X_0(m); Y_0)=\mathcal L(^mX_0(1),\dots,X_0(m); Y_0)
$$
and
$$
\Pi_{r_1,p_1}^{\Lambda}(^mX_1(1),\dots,X_1(m); Y_1)=\mathcal L(^mX_1(1),\dots,X_1(m); Y_1)
$$
 then,
\[ \Pi_{r_\theta,p_\theta}^{\Lambda}(^mX_\theta(1),\dots,X_\theta(m); Y_\theta)=\mathcal L(^mX_\theta(1),\dots,X_\theta(m); Y_\theta).\]

\end{theorem}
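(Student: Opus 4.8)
The plan is to recast all three coincidence statements as the boundedness of one and the same $(m+1)$-linear evaluation map and then to interpolate that map by the complex method, the two injective $\theta$-properties being exactly what is needed to identify the interpolation spaces of its factors. The first step is to pass through injective tensor products. For a finitely supported sequence $(x_n)$ in a Banach space $X$ a direct computation of the injective norm gives $\varepsilon\big(\sum_n e_n\otimes x_n\big)=\sup_{\|x^*\|\le 1}\big(\sum_n|x^*(x_n)|^p\big)^{1/p}=\omega_p((x_n))$, so that $\ell_p\,\widehat\otimes_\varepsilon X=\ell_p^{w,0}(X)$ isometrically. In the same spirit, for an approximable (in particular finite-rank) multilinear map one checks with Goldstine's theorem that the injective norm on $X(1)^*\widehat\otimes_\varepsilon\cdots\widehat\otimes_\varepsilon X(m)^*\widehat\otimes_\varepsilon Y$ coincides with the operator norm in $\mathcal L(^mX(1),\dots,X(m);Y)$. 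Since the inequality defining $\pi_{r,p}^\Lambda$ involves only finitely many vectors, restricting each $X(j)$ to the span of the tested vectors shows that the coincidence $\Pi_{r,p}^\Lambda=\mathcal L$ — which, via the closed graph theorem, amounts to a uniform bound $\pi_{r,p}^\Lambda(T)\le C\|T\|$ — is equivalent to the same bound for finite-rank $T$, i.e.\ for $T$ in the injective tensor product above. Thus each coincidence is equivalent to the boundedness of the single $(m+1)$-linear map
$$B(x(1),\dots,x(m),T)=\big(T(x_\bi)\big)_{\bi\in\Lambda}$$
from $\ell_p^{w,0}(X(1))\times\cdots\times\ell_p^{w,0}(X(m))\times\big(X(1)^*\widehat\otimes_\varepsilon\cdots\widehat\otimes_\varepsilon X(m)^*\widehat\otimes_\varepsilon Y\big)$ into $\ell_r^\Lambda(Y)$, its norm being the least coincidence constant.

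Next I would identify the interpolation spaces of the three kinds of factors of $B$. Unwinding the injective $\theta$-property of $\big((\ell_{p_0^*},\ell_{p_1^*});(X_0(j),X_1(j))\big)$ and using $(\ell_{p_i^*})^*=\ell_{p_i}$ (with the usual convention that $\ell_{p^*}$ is read as $c_0$ when $p=1$), $[\ell_{p_0},\ell_{p_1}]_\theta=\ell_{p_\theta}$, together with the identification of the first step, gives for each $j$
$$[\ell_{p_0}^{w,0}(X_0(j)),\ell_{p_1}^{w,0}(X_1(j))]_\theta=\ell_{p_\theta}^{w,0}(X_\theta(j)).$$
The injective $\theta$-property of the family $\big((X_0(1),X_1(1)),\dots,(X_0(m),X_1(m));(Y_0,Y_1)\big)$ is by definition
$$[X_0(1)^*\widehat\otimes_\varepsilon\cdots\widehat\otimes_\varepsilon Y_0,\,X_1(1)^*\widehat\otimes_\varepsilon\cdots\widehat\otimes_\varepsilon Y_1]_\theta=X_\theta(1)^*\widehat\otimes_\varepsilon\cdots\widehat\otimes_\varepsilon Y_\theta,$$
which interpolates the operator factor. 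Finally, Calderón's vector-valued interpolation of $\ell_r$-spaces yields $[\ell_{r_0}^\Lambda(Y_0),\ell_{r_1}^\Lambda(Y_1)]_\theta=\ell_{r_\theta}^\Lambda(Y_\theta)$ for the target, using $Y_\theta=[Y_0,Y_1]_\theta$.

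With these identifications I would conclude by the multilinear version of the complex interpolation theorem. The map $B$ is given by the same algebraic formula at both endpoints, hence defines a single multilinear map on the two couples, and by the endpoint hypotheses it is bounded with constants $C_0$ and $C_1$. Multilinear interpolation then produces a bound at level $\theta$, of norm at most $C_0^{1-\theta}C_1^{\theta}$, between precisely the spaces computed above. Read back through the dictionary of the first step this is $\pi_{r_\theta,p_\theta}^\Lambda(T)\le C_0^{1-\theta}C_1^{\theta}\|T\|$ for every finite-rank $T$, and the reduction to finite-rank operators upgrades it to all $T\in\mathcal L(^mX_\theta(1),\dots,X_\theta(m);Y_\theta)$, which is the asserted coincidence.

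The main difficulty I anticipate is the operator variable $T$: spaces of multilinear operators do not interpolate, so one must substitute for $\mathcal L$ the injective tensor product of the duals with $Y$, verify that on its approximable part the injective norm equals the operator norm, and justify that testing the summing inequality on finite-rank operators suffices. Once this is in place, the crux is recognizing that the two injective $\theta$-properties in the hypotheses are exactly tailored to interpolate, respectively, the weak-$\ell_p$ domain factors $\ell_p^{w,0}(X(j))$ and the operator factor; the interpolation of the $\ell_r^\Lambda(Y)$ target and the final multilinear interpolation step are then routine.
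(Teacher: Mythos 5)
Your proposal has the same skeleton as the paper's proof: both recast the endpoint coincidences as boundedness of the evaluation map $T\mapsto\big(T(x_{\mathbf i})\big)_{\mathbf i\in\Lambda}$, interpolate it by the complex method, use the first injective $\theta$-property to interpolate the operator factor, the second one together with $\ell_p^{w,0}(X)\simeq\ell_p\widehat\otimes_\varepsilon X$ to interpolate the weak-$\ell_p$ factors, and vector-valued interpolation for the target $\ell_r(Y)$. The one structural difference is the treatment of the operator variable: the paper keeps $T\mapsto\widehat T$ as a linear map on $\mathcal L(^mX(1),\dots,X(m);Y)$ and identifies $[\mathcal L_0,\mathcal L_1]_\theta$ with $\mathcal L_\theta$ by asserting $\mathcal L(^mX(1),\dots,X(m);Y)\simeq X(1)^*\widehat\otimes_\varepsilon\cdots\widehat\otimes_\varepsilon X(m)^*\widehat\otimes_\varepsilon Y$, whereas you make the injective tensor product itself the $(m+1)$-st variable of a multilinear map and must therefore travel back from that tensor product to $\mathcal L$ at level $\theta$.

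That return trip is where your argument has a genuine gap. The space $X(1)^*\widehat\otimes_\varepsilon\cdots\widehat\otimes_\varepsilon X(m)^*\widehat\otimes_\varepsilon Y$ sits isometrically inside $\mathcal L$ as the closure of the finite-rank maps, in general a \emph{proper} closed subspace; so after interpolation you only know $\pi^{\Lambda}_{r_\theta,p_\theta}(T)\leq C\|T\|$ for approximable $T$. Your upgrade to all of $\mathcal L$ rests on the claim that the coincidence can be tested on finite-rank maps, justified by ``restricting each $X(j)$ to the span of the tested vectors''. But the restriction $T|_{E_1\times\cdots\times E_m}$ is a finite-rank map \emph{on the subspaces} $E_j$, while the interpolated inequality applies only to finite-rank maps on the full product $X_\theta(1)\times\cdots\times X_\theta(m)$; to invoke it you would need to extend $T|_{E_1\times\cdots\times E_m}$ to a finite-rank map on the whole product with norm at most $C\|T\|$, with $C$ independent of $\dim E_j$. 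Such uniformly norm-controlled extension of multilinear forms from finite-dimensional subspaces fails in general: for an almost isometric copy of $\ell_2^n$ inside $\ell_\infty^N$, Grothendieck's theorem forces every bilinear extension of the scalar product of $\ell_2^n$ to $\ell_\infty^N\times\ell_\infty^N$ to have norm at least $\sqrt{n}/K_G$. So the last step does not hold as written, and the equivalence you assert would need a different (and nontrivial) argument. In fairness, the paper's proof is exposed at exactly the same joint: the isomorphism $\mathcal L\simeq X(1)^*\widehat\otimes_\varepsilon\cdots\widehat\otimes_\varepsilon Y$ it invokes identifies bounded maps with approximable ones and is false in general (already $\mathcal L(^2\ell_1;\mathbb K)=\mathcal L(\ell_1;\ell_\infty)\supsetneq\ell_\infty\widehat\otimes_\varepsilon\ell_\infty$, since the formal inclusion $\ell_1\to\ell_\infty$ is not compact, hence not approximable). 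Your write-up has the merit of making this issue explicit, but the restriction argument you offer does not close it; closing it requires an extra ingredient, such as an approximation-property hypothesis on the spaces involved or a genuine proof that the coincidence can be tested on approximable maps.
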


\begin{proof}
Let, for $i=1,2$, 
\begin{align*}
U_i:\mathcal L(^mX_i(1),\dots,X_i(m); Y_i)&\to\mathcal L\big(^m \ell_{p_i}^{w,0}(X_i(1)),\dots,\ell_{p_i}^{w,0}(X_i(m));\ell_{r_i}(Y_i)\big)\\
T&\mapsto \widehat T
\end{align*}
where $\widehat T$ is defined by
\[ \widehat T\Big(\big(x_i(1)\big)_i,\dots,\big(x_i(m)\big)_i\Big)=\Big(T\big(x_{i_1}(1),\dots,x_{i_m}(m)\big)\Big)_{i\in\Lambda}. \]
Our assumption tells us that $U_1$ and $U_2$ are bounded maps. Hence, it induces a bounded map $U_\theta$ from $\big
[\mathcal L(^mX_0(1),\dots,X_0(m);Y_0),\mathcal L(^mX_1(1),\dots,X_1(m);Y_1)\big]_\theta$ into 
$$\Big[ \mathcal L\big(^m \ell_{p_0}^{w,0}(X_0(1)),\dots,\ell_{p_0}^{w,0}(X_0(m));\ell_{r_0}(Y_0)\big),
 \mathcal L\big(^m \ell_{p_1}^{w,0}(X_1(1)),\dots,\ell_{p_1}^{w,0}(X_1(m));\ell_{r_1}(Y_1)\big) \Big]_\theta.$$ 

Since $\mathcal L(^mX_i(1),\ldots,X_i(m);Y_i)\simeq X_i(1)^*\widehat\otimes_\varepsilon \cdots \widehat\otimes_\varepsilon  X_i(m)^*\widehat\otimes_\varepsilon Y_i$, by the injective $\theta$-property we get that 

 $$\Big
[\mathcal L\big(^mX_0(1),\dots,X_0(m);Y_0\big),\mathcal L\big(^mX_1(1),\dots,X_1(m);Y_1\big)\Big]_\theta=\mathcal L(^mX_\theta(1),\ldots,X_\theta(m);Y_\theta).$$
Let us denote
$$
E=\Big[ \mathcal L\big(^m \ell_{p_0}^{w,0}(X_0(1)),\dots,\ell_{p_0}^{w,0}(X_0(m));\ell_{r_0}(Y_0)\big),
 \mathcal L\big(^m \ell_{p_1}^{w,0}(X_1(1)),\dots,\ell_{p_1}^{w,0}(X_1(m));\ell_{r_1}(Y_1)\big) \Big]_\theta
$$
and
$$
F=\mathcal L\Big(^m\big[\ell_{p_0}^{w,0}(X_0(1)),\ell_{p_1}^{w,0}(X_1(1))\big]_\theta,\ldots ,\big[\ell_{p_0}^{w,0}(X_0(m)),\ell_{p_1}^{w,0}(X_1(m))\big]_\theta;\big[\ell_{r_0}(Y_0),\ell_{r_1}(Y_1)\big]_\theta\Big).
$$
By interpolation of multilinear maps we know that 
$$
\|\widehat T\|_F\leq \|\widehat T\|_E.
$$

Now, $\ell_p^{w,0}(X)\simeq \ell_p\widehat \otimes_\varepsilon X$. Since $\big((\ell_{p_0^*},\ell_{p_1^*});(X_0(j),X_1(j))\big)$ satisfies the injective $\theta$-property for all $j=1,\ldots,m$, 
 it follows that $\big[\ell_{p_0}^{w,0}(X_0(j)),\ell_{p_1}^{w,0}(X_1(j))\big]_\theta=\ell_{p_\theta}^{w,0}(X_\theta(j))$. By interpolation of vector-valued $\ell_p$-spaces, we also know that $\big[\ell_{r_0}(Y_0),\ell_{r_1}(Y_1)\big]_\theta=\ell_{r_\theta}(Y_\theta)$. Hence, 
$$
U_\theta:\mathcal L(^mX_\theta(1),\ldots,X_\theta(m);Y_\theta)\to \mathcal L\big(^m\ell_{p_\theta(1)}^{w,0}(X_\theta(1)),\ldots,\ell_{p_\theta(m)}^{w,0}(X_\theta(m));\ell_{r_\theta}(Y_\theta)\big)
$$
is continuous, and the results follows.
\end{proof}

The following is a particular case of the above general theorem.

\begin{theorem}\label{thm:interpolation}
Let $(X_0,X_1)$ be an interpolation pair of Banach spaces. Let for $i=0,1$, $r_i\geq p_i\geq 1$, let $m\geq 1$ and let $\Lambda\subset\mathbb N^m$. For $\theta\in ]0,1[$, denote by $X_\theta=[X_0,X_1]_\theta$ and by $r_\theta$, $p_\theta$ the real numbers
$$\frac{1}{r_\theta}=\frac{1-\theta}{r_0}+\frac{\theta}{r_1},\ \frac{1}{p_\theta}=\frac{1-\theta}{p_0}+\frac{\theta}{p_1}.$$
\begin{enumerate}
\item[(i)] Assume that $1\leq p_0,p_1\leq 2$, $m\geq 2$ and that, either $X_0^*$ and $X_1^*$ are type 2 spaces or $X_0,X_1$ are $2$-concave Banach lattices. If 
\begin{align*}
\Pi_{r_0,p_0}^{\Lambda}(^m\ell_1,\dots,\ell_1,X_0;\CC)&=\mathcal L(^m\ell_1,\dots,\ell_1,X_0;\CC)\\
\Pi_{r_1,p_1}^{\Lambda}(^m\ell_1,\dots,\ell_1,X_1;\CC)&=\mathcal L(^m\ell_1,\dots,\ell_1,X_1;\CC),\\
\end{align*}
then
\[ \Pi_{r_\theta,p_\theta}^{\Lambda}(^m\ell_1,\dots,\ell_1,X_\theta;\CC)=\mathcal L(^m\ell_1,\dots,\ell_1,X_\theta;\CC).\]
\item[(ii)] Assume that $1\leq p_0,p_1\leq 2$ or that $p_0=p_1$. If 
\begin{align*}
\Pi_{r_0,p_0}^{\Lambda}(^m\ell_1,\dots,\ell_1;X_0)&=\mathcal L(^m \ell_1,\dots,\ell_1;X_0)\\
\Pi_{r_1,p_1}^{\Lambda}(^m\ell_1,\dots,\ell_1;X_1)&=\mathcal L(^m \ell_1,\dots,\ell_1;X_1),\\
\end{align*}
then
\[\Pi_{r_\theta,p_\theta}^{\Lambda}(^m\ell_1,\dots,\ell_1;X_\theta)=\mathcal L(^m \ell_1,\dots,\ell_1;X_\theta).\]
\end{enumerate} 
\end{theorem}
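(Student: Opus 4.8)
The plan is to read both statements off Theorem \ref{genint}, so that everything reduces to verifying its two injective $\theta$-property hypotheses in these concrete situations. For (i) I would invoke Theorem \ref{genint} with the factor pairs $(\ell_1,\ell_1),\dots,(\ell_1,\ell_1),(X_0,X_1)$ and target $(\CC,\CC)$; for (ii) with all factor pairs equal to $(\ell_1,\ell_1)$ and target $(X_0,X_1)$. In each case two things have to be checked: the global family must have the injective $\theta$-property, and each individual pair $\big((\ell_{p_0^*},\ell_{p_1^*});(X_0(j),X_1(j))\big)$ must have it. Granting these, Theorem \ref{genint} (or, in the borderline case $p_0=p_1$ of (ii), its proof) yields $\Pi_{r_\theta,p_\theta}^{\Lambda}=\mathcal L$ at the interpolated exponents, which is the desired conclusion.

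The individual pairs I would settle by Kouba's theorem, as recalled after its statement. Over an $\ell_1$-factor the relevant identity is $[\ell_{p_0}\widehat\otimes_\varepsilon\ell_1,\ell_{p_1}\widehat\otimes_\varepsilon\ell_1]_\theta=\ell_{p_\theta}\widehat\otimes_\varepsilon\ell_1$, i.e. the case $q_0=q_1=1$ of the displayed $\ell_p\widehat\otimes_\varepsilon\ell_q$ computation: since $1\le p_i\le2$, both $\ell_{p_i}$ and $\ell_1$ are $2$-concave Banach lattices and condition (3) applies. In (i) there is in addition the pair over the $X$-factor, asking that $[\ell_{p_0}\widehat\otimes_\varepsilon X_0,\ell_{p_1}\widehat\otimes_\varepsilon X_1]_\theta=\ell_{p_\theta}\widehat\otimes_\varepsilon X_\theta$; this is precisely where the dichotomy in the hypotheses is used. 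The $\ell_{p_i}$-factor is admissible because $1\le p_i\le2$ makes $\ell_{p_i}$ $2$-concave and $\ell_{p_i}^*=\ell_{p_i^*}$ of type $2$, while the $X$-factor is covered by Kouba's condition (3) when $X_0,X_1$ are $2$-concave lattices and by his type-$2$ condition when $X_0^*,X_1^*$ are type $2$. For (ii), when $p_0=p_1$ the $\ell_p$-factor is fixed, the individual identity degenerates to interpolating a single space $\ell_p\widehat\otimes_\varepsilon\ell_1$ and is trivially true; this is why (ii) also allows $p_0=p_1$ without requiring $p_i\le 2$, one then simply re-running the proof of Theorem \ref{genint}, in which $p_i\le2$ served only to secure these now-automatic properties.

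The global family is the hard part, since it is an $m$-fold injective tensor product
\[
\big[\ell_1^*\widehat\otimes_\varepsilon\cdots\widehat\otimes_\varepsilon\ell_1^*\widehat\otimes_\varepsilon W_0,\ \ell_1^*\widehat\otimes_\varepsilon\cdots\widehat\otimes_\varepsilon\ell_1^*\widehat\otimes_\varepsilon W_1\big]_\theta,
\]
with $W_i=X_i^*$ for (i) and $W_i=X_i$ for (ii), and such $m$-fold interpolations lie outside Kouba's two-fold theory; indeed Theorem \ref{thm:interpolationtensor} shows the corresponding identity can fail once the $\ell_1^*$-factors are replaced by $\ell_{p^*}$-factors with $1<p_0<m^*<p_1$. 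The argument must therefore exploit the special position of $\ell_1$, and the idea I would use is that $\ell_1^*=\ell_\infty$ is an $\mathcal L_\infty$-space, so that injective tensor products against it peel off one at a time under complex interpolation: establishing $[\ell_\infty\widehat\otimes_\varepsilon W_0,\ell_\infty\widehat\otimes_\varepsilon W_1]_\theta=\ell_\infty\widehat\otimes_\varepsilon[W_0,W_1]_\theta$ for an arbitrary couple and iterating removes all $\ell_1^*$-factors and leaves only the trivial interpolation $[W_0,W_1]_\theta$. I expect this peeling property to be the real crux of the proof. It is the feature of $\ell_1$ that replaces Kouba's hypotheses, and because it imposes no condition on the remaining factor it is exactly what allows part (ii) to hold with no type or cotype restriction on $X$, while in part (i) the sole restrictions on $X$ come from the single two-fold pair treated in the previous paragraph.

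Assembling the pieces, once the global and individual injective $\theta$-properties are in place the hypotheses $\Pi^{\Lambda}_{r_i,p_i}=\mathcal L$ at the endpoints propagate through Theorem \ref{genint} to the interpolated space, giving the stated coincidence in both (i) and (ii).
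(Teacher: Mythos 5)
Your proposal is correct and takes essentially the same route as the paper: both reduce the statement to Theorem \ref{genint}, verify the two-fold pairs via Kouba's theorem (with the $p_0=p_1$ case of (ii) being trivial), and handle the $m$-fold family by exploiting the special identification $\ell_\infty\widehat\otimes_\varepsilon\cdots\widehat\otimes_\varepsilon\ell_\infty\widehat\otimes_\varepsilon X\simeq\ell_\infty\widehat\otimes_\varepsilon X=\ell_\infty(X)$ together with interpolation of vector-valued sequence spaces. The only cosmetic difference is that the paper collapses all the $\ell_\infty$ factors into a single one and then interpolates once, whereas you peel them off by iterating the one-step identity $[\ell_\infty\widehat\otimes_\varepsilon W_0,\ell_\infty\widehat\otimes_\varepsilon W_1]_\theta=\ell_\infty\widehat\otimes_\varepsilon[W_0,W_1]_\theta$; the underlying ingredients are identical.
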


\begin{proof} We start with part (i). 
By Theorem \ref{genint} it suffices to prove  that the families 
\begin{equation}\label{ellX}
\big((\ell_1,\ell_1),\ldots,(\ell_1,\ell_1),(X_0,X_1);(\mathbb C,\mathbb C)\big),
\end{equation}
\begin{equation}\label{ellX2}
\big((\ell_{p_0^*},\ell_{p_1^*});(X_0,X_1)\big)
\end{equation}
and 
\begin{equation}\label{ellX3}
\big((\ell_{p_0^*},\ell_{p_1^*});(\ell_1,\ell_1)\big)
\end{equation}
 satisfy the injective $\theta$-property for any $0<\theta<1$. 

For the family (\ref{ellX}) we have
$$
[\ell_\infty\widehat\otimes_\varepsilon\cdots\widehat\otimes_\varepsilon\ell_\infty\widehat\otimes_\varepsilon X_0^*,\ell_\infty\widehat\otimes_\varepsilon\cdots\widehat\otimes_\varepsilon\ell_\infty\widehat\otimes_\varepsilon X_1^*]_\theta =
 [\ell_\infty\widehat\otimes_\varepsilon X_0^*,\ell_\infty\widehat\otimes_\varepsilon X_1^*]_\theta
=
$$
$$
 [\ell_\infty( X_0^*),\ell_\infty(X_1^*)]_\theta=\ell_\infty(X_\theta^*)=\ell_\infty\widehat\otimes_\varepsilon X_\theta^*=\ell_\infty\widehat\otimes_\varepsilon\cdots\widehat\otimes_\varepsilon\ell_\infty\widehat\otimes_\varepsilon X_\theta^*,
$$
where above we have used the well-known fact that, for any Banach space $X$, we have $ \ell_\infty\widehat\otimes_\veps\cdots\widehat \otimes_\veps\ell_\infty\widehat\otimes_\veps X^*\simeq\ell_\infty\widehat\otimes_\varepsilon X^*=\ell_\infty(X^*)$. 

By \cite[Theorem 4.2]{Ko} and the assumptions on $X_i(j)^*$ or on $X_i(j)$, $i=0,1$, $j=1,\ldots,m$,   we have that the family (\ref{ellX2}) satisfies the injective $\theta$-property. The family (\ref{ellX3})  satisfies the property by \cite[Theorem 4.2]{Ko}.

To get part (ii), it remains to observe that under our assumptions on $p_0$ and $p_1$, the families $\big((\ell_1,\ell_1),\ldots,(\ell_1,\ell_1);(X_0,X_1)\big)$ and $\big(\ell_{p_0^*},\ell_{p_1^*});(\ell_1,\ell_1)\big)$ have the injective $\theta$-property.
By the interpolation of vector-valued $\ell_p$-spaces, 
$$
[\ell_\infty\widehat\otimes_\varepsilon\cdots\widehat\otimes_\varepsilon\ell_\infty\widehat\otimes_\varepsilon X_0,\ell_\infty\widehat\otimes_\varepsilon\cdots\widehat\otimes_\varepsilon\ell_\infty\widehat\otimes_\varepsilon X_1]_\theta =
 [\ell_\infty( X_0),\ell_\infty(X_1)]_\theta=
$$
$$
\ell_\infty(X_\theta)=\ell_\infty\widehat\otimes_\varepsilon X_\theta=\ell_\infty\widehat\otimes_\varepsilon\cdots\widehat\otimes_\varepsilon\ell_\infty\widehat\otimes_\varepsilon X_\theta.
$$
\end{proof}

\subsection{A multilinear Kwapie\'n's theorem}
As an application of the previous result, we improve a multilinear version of Kwapie\'n's theorem which was proposed in \cite{DPS10} in the following form. Let  $T\in\mathcal{L}(^{m}\ell_{1};\ell_{p})$ and $A_{k}\in\mathcal{L}(^{n}
\ell_{\infty};\ell_{1})$ for all $k=1,...,m$. Then the composition $T\left(
A_{1},...,A_{m}\right)  $ is multiple $(r,1)$-summing for
\[
r=\left\{
\begin{array}
[c]{ll}%
\frac{2n}{n+2-\frac{2}{p}} & \text{ if }1\leq p\leq2\\
\frac{2n}{\frac{2n}p+1} & \text{ if }2\leq p\leq \frac{2n}{n-1}\\
2&\ \text{ if }\frac{2n}{n-1}\leq p\leq\infty.
\end{array}
\right.
\]

We improve this result when $p\geq 2$ and we also give an analogue for the notion of absolute summability.

\begin{theorem}\label{thm:kwapien}
Let $T\in\mathcal{L}(^{m}\ell_{1};\ell_{p})$ and $A_{k}\in\mathcal{L}(^{n}
\ell_{\infty};\ell_{1})$ for all $k=1,...,m$. 
\begin{enumerate}
\item The composition $T\left(
A_{1},...,A_{m}\right)  $ is multiple $(r,1)$-summing for
\[
r=\left\{
\begin{array}
[c]{ll}%
\frac{2n}{n+2-\frac{2}{p}} & \text{ if }1\leq p\leq2\\
\frac{2n}{n+\frac{2}{p}} & \text{ if }2\leq p\leq+\infty.
\end{array}
\right.
\]
\item Assume $n\geq 2$. The composition $T\left(
	A_{1},...,A_{m}\right)  $ is absolutely $(r,2)$-summing for
\[
r=\left\{
\begin{array}
[c]{ll}%
\frac{2p}{mp+2p-2} & \text{ if }1\leq p\leq2\\
\frac{2p}{mp+2} & \text{ if }2\leq p\leq+\infty.
\end{array}
\right.
\]	
\end{enumerate}
\end{theorem}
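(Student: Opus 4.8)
The plan is to obtain both statements by interpolating coincidence results in the target exponent $p$. In each part $1/r$ is an affine function of $1/p$ on $[1,2]$ and on $[2,\infty]$ with a breakpoint at $p=2$, so I would treat these two ranges separately and use $p\in\{1,2,\infty\}$ as base cases. Since $A_1,\dots,A_m$ are fixed, the only data varying with $p$ is $T$ together with the target $\ell_p$, and one works with the linear assignment $T\mapsto\widehat{T(A_1,\dots,A_m)}$.

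\textbf{Part (1).} For $1\le p\le2$ the value of $r$ is already the one in \cite{DPS10}, so there is nothing to do. For $p\ge2$ I would interpolate between the two endpoints furnished by \cite{DPS10}: at $p_0=2$ the composition is multiple $(\frac{2n}{n+1},1)$-summing and at $p_1=\infty$ it is multiple $(2,1)$-summing. Writing $r_0=\frac{2n}{n+1}$, $r_1=2$, these endpoint coincidences are exactly the statements that
\[
V_i:\mathcal L(^m\ell_1;\ell_{p_i})\to\mathcal L\big(^{mn}\ell_1^{w,0}(\ell_\infty);\ell_{r_i}(\ell_{p_i})\big),\qquad T\mapsto\widehat{T(A_1,\dots,A_m)},
\]
are bounded for $i=0,1$, with norm $\le C\prod_k\|A_k\|$. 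Interpolating $V_0,V_1$ as in the proof of Theorem \ref{genint}, the source interpolates to $\mathcal L(^m\ell_1;\ell_{p_\theta})$ by the injective $\theta$-property (via the same $\ell_\infty$-tensor computation as in the proof of Theorem \ref{thm:interpolation}, using $\ell_\infty\widehat\otimes_\varepsilon\cdots\widehat\otimes_\varepsilon\ell_\infty\widehat\otimes_\varepsilon Y\simeq\ell_\infty(Y)$ and $[\ell_\infty(\ell_2),\ell_\infty(\ell_\infty)]_\theta=\ell_\infty(\ell_{p_\theta})$), while the target interpolates to $\mathcal L(^{mn}\ell_1^{w,0}(\ell_\infty);\ell_{r_\theta}(\ell_{p_\theta}))$ by interpolation of multilinear maps and of vector-valued $\ell_r$-spaces. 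Hence $V_\theta$ is bounded, which says precisely that every $T(A_1,\dots,A_m)$ is multiple $(r_\theta,1)$-summing; and $\frac1{r_\theta}=\frac{1-\theta}{r_0}+\frac{\theta}{r_1}$ together with $\frac1{p_\theta}=\frac{1-\theta}{2}$ gives $r_\theta=\frac{2n}{n+2/p}$, as claimed. This already improves the middle range $\frac{2np}{2n+p}$ of \cite{DPS10}, since interpolating the two endpoints beats their direct estimate there.

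\textbf{Part (2).} Now the exponents fall below $1$ as soon as $m\ge2$ (already $r=\frac2{m+1}$ at $p=2$), so Theorem \ref{genint}, which requires $r\ge1$, is unavailable and I would argue directly, combining two ingredients. First, cotype tames the inner maps: for $n\ge2$ every $n$-linear $A_k:\ell_\infty^n\to\ell_1$ is absolutely $(2,2)$-summing, i.e. $\big(\sum_i\|A_k(\mathbf y_i^k)\|_1^2\big)^{1/2}\le C\prod_{l=1}^n\omega_2\big((y_i^{(k,l)})_i\big)$, because $\ell_1$ has cotype $2$; the $n$-fold Rademacher averaging behind this estimate is what breaks down for $n=1$, which is exactly why $n\ge2$ is assumed. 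Second, $T:\ell_1^m\to\ell_p$ satisfies a diagonal Kwapie\'n estimate
\[
\Big(\sum_i\|T(z_i^1,\dots,z_i^m)\|_p^{r}\Big)^{1/r}\le C\prod_{k=1}^m\Big(\sum_i\|z_i^k\|_1^{2}\Big)^{1/2},\qquad \frac1r=\frac{m-1}2+\frac1\rho,
\]
where $\frac1\rho=1-|\frac1p-\frac12|$ is the classical (linear) Kwapie\'n exponent; one checks that this $r$ is exactly the asserted value, the breakpoint at $p=2$ being inherited from $\rho$. This estimate comes from applying Kwapie\'n's theorem in one coordinate of $T$ and controlling the remaining $m-1$ coordinates in the strong $\ell_2$-norm (the source of the summand $\frac{m-1}2$). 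Composing, i.e. substituting $z_i^k=A_k(\mathbf y_i^k)$ into the diagonal estimate and bounding the resulting factors by the first ingredient, shows that $T(A_1,\dots,A_m)$ is absolutely $(r,2)$-summing.

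\textbf{Main obstacle.} For Part (1) the only real work is internal to the interpolation of $V_0,V_1$: verifying that the interpolated target is again the multiple-$(r_\theta,1)$-summing class, which is just the $\widehat T$-construction and injective $\theta$-property of Theorem \ref{genint} applied to the fixed family of compositions rather than to a whole operator space. For Part (2) the two delicate points are the diagonal Kwapie\'n estimate for $T$ --- in particular the rigorous ``Kwapie\'n in one coordinate, $\ell_2$ in the other $m-1$'' splitting, whose difficulty is that the one-variable operator depends on the summation index, so Kwapie\'n's theorem has to be applied in a uniform or linearized form --- and the cotype-$2$ derivation of the $(2,2)$-summing bound for the $A_k$; together with the structural fact that $r<1$ forecloses the interpolation shortcut that worked in Part (1).
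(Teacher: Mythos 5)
Your Part (1) is correct and is essentially the paper's own argument: the paper likewise fixes $A_1,\dots,A_m$, observes that the endpoint cases $p=2$ (where \cite{DPS10} gives $r=\tfrac{2n}{n+1}$) and $p=\infty$ (where it gives $r=2$) amount to boundedness of the maps $T\mapsto\widehat{T}_A$, and interpolates exactly as in Theorem \ref{thm:interpolation}; your computation of $r_\theta=\tfrac{2n}{n+2/p}$ agrees with the paper's.

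Part (2), however, contains a genuine gap: your ``diagonal Kwapie\'n estimate'' is false. Take $m=2$, $p=2$, so $r=\tfrac23$, and let $T\colon\ell_1\times\ell_1\to\ell_2$ be $T(z,w)=\sum_j z_jw_je_j$, which has $\|T\|\le 1$. With $z_i=w_i=e_i$, $i=1,\dots,N$, the left-hand side of your estimate equals
\[
\Big(\sum_{i=1}^N\|e_i\|_2^{2/3}\Big)^{3/2}=N^{3/2},
\]
while the right-hand side is $C\big(\sum_{i=1}^N\|e_i\|_1^2\big)^{1/2}\big(\sum_{i=1}^N\|e_i\|_1^2\big)^{1/2}=CN$, a contradiction for large $N$. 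Already for $m=1$, $p=2$ your estimate would assert that the inclusion $\ell_1\hookrightarrow\ell_2$ maps strongly $2$-summable sequences to strongly $1$-summable ones, which the same sequence $(e_i)_{i=1}^N$ refutes ($N$ versus $C\sqrt N$). The root of the problem is your choice of intermediate quantity: in $\ell_1$ the strong norm $\big(\sum_i\|z_i\|_1^2\big)^{1/2}$ can be far smaller than the weak norm $\omega_1\big((z_i)_i\big)$ (for $(e_i)_{i=1}^N$ it is $\sqrt N$ versus $N$), and Kwapie\'n-type theorems control outputs only by $\omega_1$; no splitting ``Kwapie\'n in one coordinate, strong $\ell_2$ in the others'' can yield control by strong $\ell_2$-norms in \emph{every} coordinate. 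Relatedly, your first ingredient, while true, is too weak and is justified for the wrong reason: by cotype $2$ of $\ell_1$ and Kahane's inequality one has $\big(\sum_i\|z_i\|_1^2\big)^{1/2}\le C\,\omega_1\big((z_i)_i\big)$, so absolute $(2,2)$-summability of the $A_k$ carries strictly less information than weak-$1$ summability of their outputs; moreover it also holds for $n=1$ (it is then exactly Maurey's theorem that operators from $C(K)$-spaces into cotype-$2$ spaces are $2$-summing), so it cannot be the reason for the hypothesis $n\ge2$.

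The paper keeps weak summability as the intermediate quantity, and this is what makes the composition work. By P\'erez-Garc\'ia's theorem (every $n$-linear form on $\ell_\infty\times\cdots\times\ell_\infty$ with $n\ge2$ is absolutely $(1;2)$-summing --- this is where $n\ge2$ is genuinely used, since the statement fails for linear functionals), each $A_k$ maps weakly $2$-summable sequences to \emph{weakly} $1$-summable sequences; and every $m$-linear $T\colon\ell_1\times\cdots\times\ell_1\to\ell_p$ is absolutely $(r,1)$-summing with the stated $r$, obtained by interpolating Bernardino's coincidence at $p=2$ with Botelho's at $p=1$ and $p=\infty$. (Incidentally, the paper performs this interpolation even though $r<1$ here, so the quasi-Banach obstruction you flagged is glossed over rather than avoided.) Composing these two facts gives Part (2); to repair your outline you would need to replace both of your ingredients by these.
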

Observe that for $n=m=1$, the first point gives exactly Kwapie\'n's theorem.
\begin{proof}
%\begin{enumerate}
%\item 
By \cite[Theorem 6.1]{DPS10}, the theorem is known for $p\leq2$ and for
$p=+\infty$. The statement for $p\in (2,+\infty)$ follows from a variant of
Theorem \ref{thm:interpolation}.  We fix
$A_{1},...,A_{m}$ and define, for $p\geq2$, $r(p)=\frac{2n}{n+\frac{2}{p}}$.
Let $S_p$ be defined by
\begin{align*}
S_p:\mathcal L(^m\ell_1,\dots,\ell_1;\ell_p)&\to\mathcal L(^m \ell_{1}^w(\ell_\infty),\dots,\ell_{1}^w(\ell_\infty);\ell_{r(p)})\\
T&\mapsto \widehat T_A
\end{align*}
where $\widehat T_A$ is defined by
\[ \widehat T_A\big(x_i(1),\dots,x_i(m)\big)=\big(T(A_1(x_{i_1}(1)),\dots,A_m(x_{i_m}(m))\big)_{i\in\Lambda}. \]
We know that $S_{2}$ and $S_{\infty}$ are bounded and we may interpolate as in the proof of Theorem~\ref{thm:interpolation} to deduce that $S_p$ is bounded for all $p\in [2,+\infty]$.
%\item 

In order to prove (2) we shall use the following three results:
\begin{enumerate}[label=(\alph*)]
	
\item (\cite[Corollary 2.5]{perez}) Every continuous $n$-linear ($n\geq2$) form on $\ell_{\infty}\times\ldots\times\ell_{\infty}$ is absolutely $(1;2)$-summing. 
\item (\cite[Theorem 3.1]{bern}) Every continuous $m$-linear operator from $\ell_{1}\times\ldots\times\ell_{1}$ to $\ell_{2}$ is absolutely $(2/(m+1);1)$-summing.
\item (\cite[Theorem 2.5]{bot}) Every continuous $m$-linear operator from $\ell_{1}\times\ldots\times\ell_{1}$ to any Banach space $F$ is absolutely $(2/m;1)$-summing. 
\end{enumerate}

	Let us consider the case $1\leq p\leq 2$. From (b) and (c) with $F=\ell_{1}$ and Theorem \ref{thm:interpolation} we conclude that every continuous $m$ linear operator from $\ell_{1}\times\ldots\times\ell_{1}$ to $\ell_{p}$ is absolutely $(2p/(mp+2p-2);1)$-summing.
	Now the result is completed since from (a) we know that all continuous $n$-linear ($n\geq2$) operators from $\ell_{\infty}\times\ldots\times\ell_{\infty}$ to $\ell_{1}$ are weakly absolutely $(1;2)$-summing (sends weakly 2 summable sequences into weakly 1 summable sequences).

	Let us consider the case $2\leq p\leq \infty$. From (b) and (c) with $F=\ell_{\infty}$ and Theorem \ref{thm:interpolation} we conclude that every continuous $m$ linear operator from $\ell_{1}\times\ldots\times\ell_{1}$ to $\ell_{p}$ is absolutely $(2p/(mp+2);1)$-summing. 
	Now the result is completed since from (a) all continuous $n$ linear ($n\geq2$) operators from $\ell_{\infty}\times\ldots\times\ell_{\infty}$ to $\ell_{1}$ are weakly absolutely $(1;2)$-summing (sends weakly 2 summable sequences into weakly 1 summable sequences).
%\end{enumerate}
\end{proof}

\begin{remark}
There is an interesting phenomenon here: while in the case of multiple summing operators we have dependence just on $n$ in the final result, we have dependence just on $m$ in the case of absolutely summing operators.
\end{remark}

\subsection{Interpolation of $m$ injective tensor products}

The proof of Theorem \ref{genint}  depends heavily on the following formula of interpolation:
$$
[X_0(1)^*\widehat\otimes_\varepsilon \cdots \widehat\otimes_\varepsilon  X_0(m)^*\widehat\otimes_\varepsilon Y_0,X_1(1)^*\widehat\otimes_\varepsilon \cdots \widehat\otimes_\varepsilon X_1(m)^*\widehat\otimes_\varepsilon Y_1]_\theta =
$$
$$
[X_0(1)^*, X_1(1)^*]_\theta \widehat\otimes_\varepsilon \cdots \widehat\otimes_\varepsilon[ X_0(m)^*, X_1(m)^*]_\theta \widehat\otimes_\varepsilon [Y_0, Y_1]_\theta
$$
that defines the injective $\theta$-property for the family of interpolation pairs of Banach spaces $\big((X_0(1),X_1(1)),\ldots,(X_0(m),X_1(m));(Y_0,Y_1)\big)$. 
Let us pay attention to the interpolation of  injective $m$-fold tensor products.
In particular, in the proof of Theorem \ref{thm:interpolation} this formula becomes 
$$[\ell_\infty\widehat\otimes_\veps\cdots\widehat\otimes_\veps\ell_\infty\widehat\otimes_\veps X_0,\ell_\infty\widehat\otimes_\veps\cdots\widehat\otimes_\veps\ell_\infty\widehat\otimes_\veps X_1]_\theta=\ell_\infty\widehat\otimes_\veps\cdots\widehat\otimes_\veps\ell_\infty\widehat\otimes_\veps [X_0,X_1]_\theta,$$ which is fulfilled thanks to deal with $\ell_\infty=\ell_1^*.$
To expect to replace $\ell_1$ by other spaces in Theorem \ref{thm:interpolation}, we need results saying how to interpolate injective tensor products with $m\geq 2$ factors. 
For $m=2$, this has been thoroughly studied in \cite{Ko} and \cite{Mic}. Nevertheless, nothing seems known about the interpolation of $m$ tensor products, $m\geq 3$
except when all but one of the spaces are equal to $\ell_\infty$. In particular, in \cite{GMM16}, the authors ask the following question: let $m\geq 3$, $1\leq p_0<p_1\leq 2$, $\theta\in (0,1)$. Is it true that
\[
\lbrack\widehat\otimes_{i=1,\varepsilon}^{m}\ell_{p_{0}},\widehat\otimes_{i=1,\varepsilon}%
^{m}\ell_{p_{1}}]_{\theta}=\widehat\otimes_{i=1,\varepsilon}^{m}\ell_{p_{\theta}}%
\]
where $\frac{1}{p_{\theta}}=\frac{1-\theta}{p_{0}}+\frac{\theta}{p_{1}}$? In other words: Has the family $\big((\ell_{p_0},\ell_{p_1}),\stackrel{m}{\ldots},(\ell_{p_0},\ell_{p_1})\big)$ got the injective $\theta$-property? A positive answer would have interesting consequences (see \cite{GMM16} again). Unfortunately, we
show that this is false provided $p_0$ is small enough and $p_1$ is big enough.

\begin{theorem}\label{thm:interpolationtensor}
Let $m\geq3$ be a positive integer and let $1\leq p_{0}<m^{\ast}<p_1$. Then for all $\theta\in(0,1)$
\[
\lbrack\widehat\otimes_{i=1,\varepsilon}^{m}\ell_{p_{0}},\widehat\otimes_{i=1,\varepsilon}%
^{m}\ell_{p_{1}}]_{\theta}
\ncong
\widehat\otimes_{i=1,\varepsilon}^{m}\ell_{p_{\theta}},
\]
where $\frac{1}{p_{\theta}}=\frac{1-\theta}{p_{0}}+\frac{\theta}{p_{1}}$.
\end{theorem}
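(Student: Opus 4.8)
The plan is to disprove the conjectured identity by evaluating the two norms on the \emph{diagonal subspace}
\[
D(p)=\overline{\operatorname{span}}\{\,e_i\otimes\cdots\otimes e_i:\ i\in\mathbb N\,\}\subset\widehat\otimes_{i=1,\varepsilon}^{m}\ell_p,
\]
whose injective norm is explicit and changes character across the critical exponent $m^*$. Unwinding the definition of the injective norm one has, for finitely supported scalars $a=(a_i)$,
\[
\Big\|\sum_i a_i\,e_i\otimes\cdots\otimes e_i\Big\|_{\widehat\otimes_{i=1,\varepsilon}^{m}\ell_p}=\sup_{\|b^{(k)}\|_{p^*}\le1}\Big|\sum_i a_i\prod_{k=1}^m b^{(k)}_i\Big|,
\]
and substituting $c_i=\prod_k b^{(k)}_i$ (so that $\|c\|_{p^*/m}\le1$ by Hölder, with equality attained on the diagonal $b^{(1)}=\cdots=b^{(m)}$) identifies this supremum with $\|a\|_{\beta(p)}$, where $\beta(p)=\frac{p^*}{p^*-m}$, when $p<m^*$ (i.e.\ $p^*>m$), and with $\|a\|_\infty$ when $p\ge m^*$. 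Hence $D(p)\cong\ell_{\beta(p)}$ isometrically for $p<m^*$, while $D(p)\cong c_0$ for $p\ge m^*$.

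The structural heart of the argument is that $D(p)$ is $1$-complemented in $\widehat\otimes_{i=1,\varepsilon}^{m}\ell_p$ \emph{by a projection independent of $p$}. For $m=2$ one may average the isometries $R_\varepsilon\otimes R_\varepsilon$ over Rademacher signs, but this device collapses for $m\ge3$ (the symmetric average no longer isolates the diagonal). Instead I would average the unimodular multiplier isometries $D_{f_1}\otimes\cdots\otimes D_{f_m}$, where $D_f(e_i)=f(i)e_i$ and
\[
f_1(i)=\cdots=f_{m-1}(i)=\omega_i,\qquad f_m(i)=\bar\omega_i^{\,m-1},
\]
with $(\omega_i)$ independent and uniform on the unit circle. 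Since $\mathbb E\big[\prod_k f_k(i_k)\big]=\mathbf 1[i_1=\cdots=i_m]$, the average $P=\mathbb E_\omega\big[D_{f_1}\otimes\cdots\otimes D_{f_m}\big]$ is exactly the projection onto $D(p)$, and it is contractive as an average of isometries of $\widehat\otimes_{i=1,\varepsilon}^{m}\ell_p$. Because $P$ is given by the same multiplier throughout the scale, $(D(p_0),D(p_1))$ is a $1$-complemented subcouple of $\big(\widehat\otimes_{i=1,\varepsilon}^{m}\ell_{p_0},\widehat\otimes_{i=1,\varepsilon}^{m}\ell_{p_1}\big)$.

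Since complex interpolation commutes with complemented subcouples, the diagonal subspace of $[\widehat\otimes_{i=1,\varepsilon}^{m}\ell_{p_0},\widehat\otimes_{i=1,\varepsilon}^{m}\ell_{p_1}]_\theta$ equals $[D(p_0),D(p_1)]_\theta$. Using $p_1>m^*$, so $D(p_1)\cong c_0$, this is $[\ell_{\beta(p_0)},c_0]_\theta=\ell_{\beta_\theta}$ with $\frac1{\beta_\theta}=(1-\theta)\big(1-\frac{m}{p_0^*}\big)$. On the other hand the diagonal subspace of $\widehat\otimes_{i=1,\varepsilon}^{m}\ell_{p_\theta}$ is $\ell_{\beta(p_\theta)}$ (or $c_0$ if $p_\theta\ge m^*$), and from $\frac1{p_\theta^*}=\frac{1-\theta}{p_0^*}+\frac{\theta}{p_1^*}$ a direct computation gives, in the range $p_\theta<m^*$,
\[
\frac1{\beta_\theta}-\frac1{\beta(p_\theta)}=\theta\Big(\frac{m}{p_1^*}-1\Big)>0,
\]
the inequality because $p_1>m^*$ forces $m/p_1^*>1$; the case $p_\theta\ge m^*$ is immediate. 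Hence $\beta_\theta<\beta(p_\theta)$ strictly for every $\theta\in(0,1)$, degenerating to a reflexive $\ell_{\beta_\theta}$ against $c_0$ once $p_\theta\ge m^*$. In every case the diagonal subspaces on the two sides are non-isomorphic (two genuinely different $\ell_\beta$, or $\ell_\beta$ versus $c_0$); concretely, the diagonal supported on the first $N$ coordinates grows like $N^{1/\beta_\theta}$ in the interpolation space and like $N^{1/\beta(p_\theta)}$ in $\widehat\otimes_{i=1,\varepsilon}^{m}\ell_{p_\theta}$, so the two norms on the common algebraic tensor product are inequivalent.

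I expect the main obstacle to be precisely the $1$-complementation of the diagonal for $m\ge3$: the naive symmetric averaging fails, and it is the asymmetric choice $f_m=\bar\omega^{\,m-1}$ that makes $P$ both contractive and equal to the diagonal multiplier on the \emph{entire} interpolation scale; verifying this, and that interpolation therefore restricts correctly to the diagonal, is the crux. A secondary point, needed to pass from the (already decisive) inequivalence of the canonical norms to the stated $\ncong$, is that the complemented models $\ell_{\beta_\theta}$ and $\ell_{\beta(p_\theta)}$ (resp.\ $c_0$) cannot coexist complementably in $\widehat\otimes_{i=1,\varepsilon}^{m}\ell_{p_\theta}$ unless $\beta_\theta=\beta(p_\theta)$; here one again invokes the explicit diagonal analysis, and this is the step requiring the most care.
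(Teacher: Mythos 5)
Your proposal is correct in substance and pivots on the same object as the paper's proof --- the diagonal tensors $\sum_{i\leq n}e_i\otimes\cdots\otimes e_i$, whose injective norm is $n^{1-m/p^*}$ for $p<m^*$ and $1$ for $p\geq m^*$ --- but the machinery you wrap around it is genuinely different. The paper never isolates the diagonal as a complemented subcouple: it fixes the single form $T_n$ and computes its interpolation norm $\|T_n\|_E$ \emph{exactly}, by taking an arbitrary competitor $f$ on the strip, projecting it onto diagonal forms through an average of one-parameter multiplier rotations with frequencies $m^{i_k}$ (Lemma \ref{lem:interpolationtensor} is the number-theoretic input guaranteeing that only diagonal frequencies survive), then symmetrizing over permutations of $\{1,\dots,n\}$ so that the competitor collapses to a scalar multiple $b(z)T_n$, and finally applying the three-lines theorem to get $\|T_n\|_E=\|T_n\|_{\mathcal L(^m\ell_{q_0})}^{1-\theta}\|T_n\|_{\mathcal L(^m\ell_{q_1})}^{\theta}$. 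You instead (i) identify the whole diagonal subspace $D(p)$ isometrically as $\ell_{\beta(p)}$ or $c_0$, (ii) build a single contractive projection, compatible across the couple, by averaging the Steinhaus multipliers $\omega\otimes\cdots\otimes\omega\otimes\bar\omega^{m-1}$ --- your computation $\mathbb{E}\big[\omega_{i_1}\cdots\omega_{i_{m-1}}\bar\omega_{i_m}^{m-1}\big]=\mathbf{1}[i_1=\cdots=i_m]$ is right, and it is a clean substitute for the paper's powers-of-$m$ lemma which moreover renders the paper's $S_n$-symmetrization step unnecessary --- and (iii) quote two standard facts: complex interpolation commutes with compatible complemented subcouples, and $[\ell_{\beta_0},c_0]_\theta=\ell_{\beta_\theta}$ with $1/\beta_\theta=(1-\theta)/\beta_0$. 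Your exponent comparison $1/\beta_\theta-1/\beta(p_\theta)=\theta(m/p_1^*-1)>0$ is exactly the paper's final estimate written in dual notation. What the paper's route buys is self-containedness (nothing beyond the three-lines theorem) and an exact formula for $\|T_n\|_E$; what yours buys is a structural explanation of the failure: on the diagonal the couple degenerates to $(\ell_{\beta_0},c_0)$, which cannot reproduce the threshold behaviour of $\beta(p_\theta)$ at $m^*$.

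One warning about your closing paragraph: the principle that $\ell_{\beta_\theta}$ and $\ell_{\beta(p_\theta)}$ (or $c_0$) ``cannot coexist complementably'' in one space unless the exponents agree is false in general (a direct sum contains both complementably), so it cannot be used to upgrade the conclusion to non-isomorphism of the two spaces as abstract Banach spaces. But no such upgrade is required: the paper itself reduces the theorem to the statement that the two canonical norms are inequivalent on the common dense subspace, i.e.\ that $\|T_n\|_E/\|T_n\|_{\mathcal L(^m\ell_{q_\theta})}\to+\infty$, and this is precisely what your main argument establishes. Your ``secondary point'' thus addresses a stronger statement than the one the paper proves, and can simply be dropped.
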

For the proof, we will use the following lemma.
\begin{lemma}\label{lem:interpolationtensor}
Let $m\geq 2$, $l\in\{2,\dots,m\}$ and $i_1,\dots,i_{l-1}\in\mathbb N$ be such that
\[m^{i_1}+\cdots+m^{i_{l-1}}=(l-1)m^{i_l}.\]
Then $i_1=\cdots=i_l$.
\end{lemma}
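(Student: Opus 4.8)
The plan is to reduce the identity to an integer identity and then exploit the constraint $l\le m$ through a single congruence modulo $m$. Writing $a=\min\{i_1,\dots,i_l\}$, I would divide the hypothesis by $m^a$, which is legitimate since every exponent then becomes non-negative; this produces
\[
\sum_{j=1}^{l-1} m^{i_j-a} = (l-1)\, m^{i_l-a},
\]
an identity between non-negative integers in which at least one of the exponents $i_j-a$ $(j\le l-1)$ or $i_l-a$ vanishes.

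First I would rule out the possibility $i_l>a$. In that case the minimum $a$ is attained only among $i_1,\dots,i_{l-1}$, so the integer $t=\#\{\,j\le l-1: i_j=a\,\}$ satisfies $t\ge 1$. Reducing the displayed identity modulo $m$, the right-hand side is $\equiv 0$ because $i_l-a\ge 1$, whereas on the left each term with $i_j>a$ contributes $0$ and each of the $t$ minimal terms contributes $1$, so the left-hand side is $\equiv t \pmod m$. Hence $m\mid t$; but $1\le t\le l-1\le m-1$, a contradiction. This is the step that genuinely uses the bound $l\le m$, and I expect it to be the crux of the whole argument: the hypothesis $l\in\{2,\dots,m\}$ is precisely what forces $t$ to lie strictly between $0$ and $m$, so that $m\mid t$ is impossible.

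Once the case $i_l>a$ is excluded we are left with $i_l=a$, and substituting $i_l-a=0$ reduces the identity to $\sum_{j=1}^{l-1} m^{i_j-a}=l-1$. I would then finish by the elementary observation that a sum of $l-1$ terms, each $m^{i_j-a}\ge 1$, can equal $l-1$ only when every term equals $1$; that is, $i_j=a$ for all $j\le l-1$. Together with $i_l=a$ this yields $i_1=\cdots=i_l$, as required. I expect none of the estimates here to be delicate: the only real input is the modular obstruction above, and the equality case of the trivial lower bound $\sum m^{i_j-a}\ge l-1$ closes the remaining case with no further computation.
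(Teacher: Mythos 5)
Your proof is correct, and it takes a genuinely different route from the paper's. The paper argues by induction on $l$: using $m>l-1$ it shows that some $i_j$ with $j\le l-1$ must equal $i_l$ (if some $i_j\ge i_l+1$ then the left-hand side is at least $m^{i_l+1}>(l-1)m^{i_l}$; if all $i_j\le i_l-1$ the left-hand side is $<(l-1)m^{i_l}$), cancels that term against one copy of $m^{i_l}$ on the right, and invokes the induction hypothesis for $l-1$. You instead give a direct, non-inductive argument: normalize by the minimal exponent $a$, use the congruence modulo $m$ to rule out $i_l>a$ (here the hypothesis $l\le m$ enters as $1\le t\le l-1<m$, excluding $m\mid t$), and then close the remaining case by the equality case of the trivial bound $\sum_j m^{i_j-a}\ge l-1$. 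Both proofs consume the hypothesis $l\le m$ at exactly one pivotal point — in the paper it powers the size comparison $m^{i_l+1}>(l-1)m^{i_l}$, in yours it powers the modular obstruction — but the mechanisms differ: the paper relies purely on monotonicity of $k\mapsto m^k$, while you rely on divisibility. Your version buys a one-pass argument with no induction; the paper's buys a proof that never leaves order-of-magnitude reasoning. Either is complete.
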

\begin{proof}
The proof is done by induction on $l$. The case $l=2$ is easy (because $m\geq 2$). Assume that the result has
been shown for $l-1\in\{2,\dots,m-1\}$ and let us prove it for $l$. If one of $i_1,\dots,i_{l-1}$ is greater than or equal to $i_l+1$, then 
$m^{i_1}+\cdots+m^{i_{l-1}}\geq m^{i_l+1}>(l-1)m^{i_l}$, a contradiction. In the same vein, it is impossible that $i_1,\dots,i_{l-1}$ are all smaller than or equal to $i_l-1$, 
otherwise $m^{i_1}+\cdots+m^{i_{l-1}}<(l-1)m^{i_l}$. Thus, at least one $i_1,\dots,i_{l-1}$ is equal to $i_l$ and we can conclude by the induction hypothesis. 
\end{proof}

\begin{proof}[Proof of Theorem \ref{thm:interpolationtensor}]
Interpreting $X_1^*\widehat\otimes_\veps\cdots\widehat\otimes_\veps X_m^*$ as $\mathcal L(^m X_1,\dots,X_m;\mathbb C)$, 
it is sufficient to prove that, for all $q_0,q_1$ with $q_0>m$ and $q_1<m$, there exists a sequence $(T_n)$ in $\mathcal L(^m \ell_{q_0})\cap \mathcal L(^m \ell_{q_1})$ such that
$$\frac{\|T_n\|_E}{\|T_n\|_{\mathcal L(^m \ell_{q_\theta})}}\xrightarrow{n\to+\infty}+\infty$$
where $E:=\big[\mathcal L(^m\ell_{q_0}),\mathcal L(^m\ell_{q_1})\big]_\theta$.
Let $T_n(x(1),\ldots,x(m))=\sum_{i=1}^n x_i(1)\cdots x_i(m)$. It is easy to show that 
\begin{equation}\label{eq:interpolationtensor1}
\|T_n\|_{\mathcal L(^m\ell_q)}\leq
\left\{
\begin{array}{ll}
1&\textrm{ provided }q<m\\
n^{1-\frac mq}&\textrm{ provided }q\geq m.
\end{array}\right.
\end{equation}
Let us now compute $\|T_n\|_E$. We shall use standard notations for interpolation, as they are exposed in \cite{BeLo}. We recall that $S$ is the (open) strip $\{z;\ 0<\Re e(z)<1\}$ and if $\overrightarrow{X}=(X_0,X_1)$ is an interpolation couple, $\mathcal F(\overrightarrow{X})$ stands for the set of functions with values in $X_0\cap X_1$ which are bounded and continuous on the closed strip $\overline{S}$ and analytic inside $S$. Here we will always have $\overrightarrow{X}=(\mathcal L(^m \ell_{q_0}),\mathcal L(^m\ell_{q_1}))$.

We recall that
\[ \|T_n\|_E=\inf\left\{\max\left(\sup_{t\in\mathbb R}\|f(it)\|_{\mathcal L(^n \ell_{q_0})},\sup_{t\in\mathbb R}\|f(1+it)\|_{\mathcal L(^n \ell_{q_1})}\right);\ f\in\mathcal F(\overrightarrow{X}),\ f(\theta)=T_n\right\}. \]

Let $f\in\mathcal F(\overrightarrow{X})$ with $f(\theta)=T_n$. For each $z\in \overline S$ the multilinear form $f(z)$ may be written 
\[ f(z)(x(1),\dots,x(m))=\sum_{\bi\in\mathbb N^m}a_\bi(z)x_{i_1}(1)\cdots x_{i_m}(m) \]
where each $a_\bi:\overline S\to\mathbb C$ is continuous and analytic inside $S$. 
Moreover, $a_{i,\dots,i}(\theta)=1$ for all $i=1,\dots,n$, $a_\bi(\theta)=0$ if $\bi\neq (i,\dots,i)$ for some $i\in\{1,\dots,n\}$.
We then define, for $\alpha\in\mathbb R$ and $z\in\overline{S}$,
\begin{align*}
&g(x(1),\dots,x(m))(\alpha,z)\\
&\quad\quad=\sum_{\bi\in\{1,\dots,n\}^m}a_{\bi}(z)\big(e^{im^{i_1}\alpha}x_{i_1}(1)\big)\cdots\big(e^{im^{i_{m-1}}\alpha}x_{i_{m-1}}(m-1)\big)\big(e^{-i(m-1)m^{i_m}\alpha}x_{i_m}(m)\big)\\
&\quad\quad=\sum_{\bi\in\{1,\dots,n\}^m}a_{\bi}(z)e^{i(m^{i_1}+\dots+m^{i_{m-1}}-(m-1)m^{i_m})\alpha}
x_{i_1}(1)\cdots x_{i_m}(m).
\end{align*}
Then for all $\alpha\in\mathbb R$ and all $t\in\mathbb R$, $\|g(\alpha,it)\|_{\mathcal L(^m \ell_{q_0})}\leq
\|f(it)\|_{\mathcal L(^m \ell_{q_0})}$ and  $\|g(\alpha,1+it)\|_{\mathcal L(^m \ell_{q_1})}\leq
\|f(1+it)\|_{\mathcal L(^m \ell_{q_1})}$. We then set
\begin{align*}
h(x(1),\dots,x(m))(z)&=\int_0^{2\pi}g(x(1),\dots,x(m))(\alpha,z)\frac{d\alpha}{2\pi}.
\end{align*}
Then, again, 
 $\|h(it)\|_{\mathcal L(^m \ell_{q_0})}\leq 
\|f(it)\|_{\mathcal L(^m \ell_{q_0})}$ and $\|h(1+it)\|_{\mathcal L(^m \ell_{q_1})}\leq 
\|f(1+it)\|_{\mathcal L(^m \ell_{q_1})}$. Moreover, the orthogonality of the characters on the torus together with Lemma \ref{lem:interpolationtensor} ensures that 
$$h(x(1),\dots,x(m))(z)=\sum_{i=1}^n a_{i,\dots,i}(z) x_i(1)\cdots x_i(m).$$
In particular, this yields $h(\theta)=T_n$. Therefore, up to now, we have established that 
\begin{align*}
\|T_n\|_E=\inf\Big\{&\max\left(\sup_{t\in\mathbb R}\|h(it)\|_{\mathcal L(^m \ell_{q_0})},\sup_{t\in\mathbb R}\|h(1+it)\|_{\mathcal L(^m \ell_{q_1})}\right);\\
&\quad \quad h(z)=\sum_{i=1}^n a_{i}(z)x_i(1)\cdots x_i(m),\\
&\quad \quad a_i:\overline{S}\to\mathbb C\textrm{ continuous and analytic inside }S,\ a_i(\theta)=1\Big\}.
\end{align*}
Let us now define, for a multilinear map $L:\ell_q\times\cdots\times\ell_q\to\CC$ which may be written $L(x(1),\ldots,x(m))=\sum_{i=1}^n a_i x_1(1)\cdots x_i(m)$
the multilinear map $L_{\textrm{sym}}$ given by
\begin{align*} L_{\textrm{sym}}(x(1),\ldots,x(m))
 &=\frac 1{n!}\sum_{\sigma\in S_n}L(x_\sigma(1),\cdots,x_{\sigma}(m))\\
 &=\frac{1}{n!}\sum_{i=1}^n \sum_{\sigma\in S_n}a_i x_{\sigma(i)}(1)\cdots x_{\sigma(i)}(m)
\end{align*}
where $S_n$ stands for the set of permutations of $\{1,\dots,n\}$ and for a vector $x$ in $\ell_q$, $x_\sigma=(x_{\sigma(1)},\dots,x_{\sigma(n)},0,\dots)$.
Namely, we have symmetrized $L$ with respect to the coordinates of each $x(1),\dots,x(m)$ ($L$ was already symmetric if we just looked at the variables). It is straighforward to check
that $L_{\textrm{sym}}$ writes
$$L_{\textrm{sym}}(x(1),\ldots,x(m))=\sum_{j=1}^n c_j x_j(1)\cdots x_j(n)$$
where, for $j\in\{1,\dots,n\}$, $c_j$ is given by
$$c_j=\frac 1{n!}\sum_{i=1}^n \sum_{\substack{\sigma\in S_n;\\ \sigma(i)=j}}a_i.$$
Let now $k\in\{1,\dots,n\}$ and $\tau\in S_n$ with $\tau(j)=k$. Then, since $\sigma\in S_n\mapsto \tau^{-1}\circ \sigma$ is a bijection of $S_n$, 
\begin{align*}
 c_j&=\frac1{n!}\sum_{i=1}^n \sum_{\substack{\sigma\in S_n;\\ \tau^{-1}\circ\sigma(i)=j}}a_i\\
 &=\frac1{n!}\sum_{i=1}^n \sum_{\substack{\sigma\in S_n;\\ \sigma(i)=k}}a_i\\
 &=c_k.
\end{align*}
This means that all the $c_j$, $j=1,\dots,n$ are equal, so that $L_{\textrm{sym}}=bT_n$ for some $b\in\CC$. Moreover, $\|L_{\textrm{sym}}\|_{\mathcal L(^m\ell_q)}\leq \|L\|_{\mathcal L(^m\ell_q)}.$
Hence, if we start from $h(z)=\sum_{i=1}^n a_{i}(z)x_i(1)\cdots x_i(m)$ with $a_i:\overline{S}\to\mathbb C$ continuous and analytic inside $S$, $a_i(\theta)=1$,
we get $h_{\textrm{sym}}(z)=b(z)T_n$ where $b:\overline{S}\to\mathbb C$ is continuous and analytic inside $S$, $b(\theta)=1$, and 
$|b(it)|\cdot \|T_n\|_{\mathcal L(^m \ell_{q_0})}\leq \|h(it)\|_{\mathcal L(^m \ell_{q_0})}$, 
$|b(1+it)|\cdot \|T_n\|_{\mathcal L(^m \ell_{q_1})}\leq \|h(1+it)\|_{\mathcal L(^m \ell_{q_1})}$. In other words, we have shown that
\begin{align*}
\|T_n\|_E=\inf\Big\{&\max\left(\sup_{t\in\mathbb R}|b(it)|\|T_n\|_{\mathcal L(^m \ell_{q_0})},\sup_{t\in\mathbb R}|b(1+it)|\|T_n\|_{\mathcal L(^m \ell_{q_1})}\right);\\
&\quad \quad b:\overline{S}\to\mathbb C\textrm{ continuous and analytic inside }S,\ b(\theta)=1\Big\}.
\end{align*}
By the three-lines theorem,
\begin{equation}
\label{eq:interpolationtensor2}
\|T_n\|_E=\|T_n\|_{\mathcal L(^m \ell_{q_0})}^{1-\theta}\|T_n\|_{\mathcal L(^m \ell_{q_1})}^\theta=n^{(1-\theta)\left(1-\frac m{q_0}\right)}.
\end{equation}
Now, if we compare \eqref{eq:interpolationtensor1} and \eqref{eq:interpolationtensor2}, then we see that
$$\frac{\|T_n\|_E}{\|T_n\|_{\mathcal L(^m \ell_{q_\theta})}}\xrightarrow{n\to+\infty}+\infty$$
as required to conclude.
\end{proof}

\begin{question}
Assume that $1\leq p_0,p_1\leq m^*$. Do we have 
\[
\lbrack\otimes_{i=1,\varepsilon}^{m}\ell_{p_{0}},\otimes_{i=1,\varepsilon}%
^{m}\ell_{p_{1}}]_{\theta}=\otimes_{i=1,\varepsilon}^{m}\ell_{p_{\theta}}?
\]
\end{question}

\section{Multiple $(r,1)$-summing multilinear forms on products of $\ell_{1}$-spaces} \label{sec:grothendieck}

In \cite{BoPe10}, the authors have shown that, for $r\geq s\geq 1$ and $p=\min(s,2)$, if $\mathcal L(\ell_1;F)=\Pi_{r,s}^{abs}(\ell_1;F)$, 
then $\mathcal L(^m\ell_1,\dots,\ell_1;F)=\Pi^{mult}_{r,p}(^m \ell_1,\dots,\ell_1;F)$. Observe that this implies the extension of \cite{PGVil03} of the Grothendieck's inequality. We need a variant of this result.

\begin{theorem}\label{thm:repeatedl1}
 Let $r\geq s\geq 1$ and $p=\min(s,2)$. If $\mathcal L(^2\ell_1,E;F)=\Pi_{r,s}^{mult}(^2\ell_1,E;F)$, then 
$\mathcal L(^m\ell_1,\dots,\ell_1,E;F)=\Pi^{mult}_{r,p}(^m \ell_1,\dots,\ell_1,E;F)$ for any $m \geq 2$.
\end{theorem}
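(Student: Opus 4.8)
The plan is to exploit the basic identity $\ell_1\widehat\otimes_\pi\cdots\widehat\otimes_\pi\ell_1\simeq\ell_1$ (isometrically) in order to collapse the $m-1$ copies of $\ell_1$ into a single one, reducing the $m$-linear statement to the bilinear hypothesis. Since the projective tensor product linearises multilinear maps, an $m$-linear $T:\ell_1\times\cdots\times\ell_1\times E\to F$ (with $m-1$ factors $\ell_1$) corresponds isometrically to a \emph{bilinear} map $\bar T:(\ell_1\widehat\otimes_\pi\cdots\widehat\otimes_\pi\ell_1)\times E\to F$ determined on elementary tensors by $\bar T(x^{(1)}\otimes\cdots\otimes x^{(m-1)},z)=T(x^{(1)},\ldots,x^{(m-1)},z)$, with $\|\bar T\|=\|T\|$. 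Identifying $\ell_1\widehat\otimes_\pi\cdots\widehat\otimes_\pi\ell_1$ with $\ell_1$, the hypothesis $\mathcal L(^2\ell_1,E;F)=\Pi_{r,s}^{mult}(^2\ell_1,E;F)$ applies to $\bar T$; since $\Pi_{r,s}^{mult}$ embeds continuously into $\mathcal L$, the closed graph theorem gives a constant $\kappa$, independent of $T$, with $\pi_{r,s}^{mult}(\bar T)\le\kappa\|T\|$. Because $p=\min(s,2)\le s$ and hence $w_p\ge w_s$ on every sequence, any $(r,s)$-summing map is $(r,p)$-summing, so in fact $\pi_{r,p}^{mult}(\bar T)\le\kappa\|T\|$. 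This mirrors the scheme of \cite{BoPe10}, the space $E$ being carried along untouched as the last variable.

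Next I would transfer the summability of $\bar T$ back to $T$. Given weakly $p$-summable sequences $(x^{(t)}_{i})_i$ in $\ell_1$, $t=1,\ldots,m-1$, and $(z_j)_j$ in $E$, form the product sequence $w_{\bi}=x^{(1)}_{i_1}\otimes\cdots\otimes x^{(m-1)}_{i_{m-1}}$, $\bi=(i_1,\ldots,i_{m-1})\in\NN^{m-1}$, viewed as a sequence in $\ell_1$. Feeding $(w_{\bi})_{\bi}$ into the combined variable of $\bar T$, the multiple $(r,p)$-summing inequality for $\bar T$ reads
\[\Big(\sum_{\bi,j}\|T(x^{(1)}_{i_1},\ldots,x^{(m-1)}_{i_{m-1}},z_j)\|^r\Big)^{1/r}\le\kappa\|T\|\,w_p\big((w_{\bi})_{\bi}\big)\,w_p\big((z_j)_j\big),\]
since $\bar T(w_{\bi},z_j)=T(x^{(1)}_{i_1},\ldots,x^{(m-1)}_{i_{m-1}},z_j)$ and the left-hand side is precisely the multiple-summing sum for $T$. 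Thus everything reduces to controlling $w_p\big((w_{\bi})_{\bi}\big)$ by $\prod_{t}w_p\big((x^{(t)}_{i})_i\big)$.

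This control is the \emph{specific property of $\ell_1$} and the crux of the argument: I must produce a constant $C=C(m)$ with
\[w_p\big((x^{(1)}_{i_1}\otimes\cdots\otimes x^{(m-1)}_{i_{m-1}})_{\bi}\big)\le C\prod_{t=1}^{m-1}w_p\big((x^{(t)}_i)_i\big)\]
for sequences in $\ell_1$, and this is exactly where the restriction $p\le 2$ (hence $p=\min(s,2)$) is forced. By a short induction it suffices to handle two factors, i.e. to bound $w_p\big((a_i\otimes b_j)_{i,j}\big)$ in $\ell_1\widehat\otimes_\pi\ell_1\simeq\ell_1$ by $C\,w_p((a_i))\,w_p((b_j))$. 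Since the dual of $\ell_1\widehat\otimes_\pi\ell_1$ is the space of bounded bilinear forms on $\ell_1\times\ell_1$, unwinding the definition shows this two-factor inequality is equivalent to the coincidence $\mathcal L(^2\ell_1;\mathbb K)=\Pi_{p,p}^{mult}(^2\ell_1;\mathbb K)$, namely that \emph{every bounded bilinear form on $\ell_1\times\ell_1$ is multiple $(p,p)$-summing}. I expect this to be the main obstacle, and to be the point where the cotype $2$ of $\ell_1$ (equivalently, a Grothendieck-type inequality) is used; it is exactly because such Hilbertian control saturates at the exponent $2$ that one cannot retain $p=s$ when $s>2$ and must pass to $p=\min(s,2)$. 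Once this two-factor estimate is secured for $p\le2$, iterating it, inserting it into the displayed inequality, and absorbing $w_p((z_j))$ gives $\pi_{r,p}^{mult}(T)\le\kappa\,C\,\|T\|$, establishing the coincidence for every $m\ge 2$.
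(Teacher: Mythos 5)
Your proposal is correct and follows essentially the same route as the paper: both arguments linearize the $m-1$ copies of $\ell_1$ through $\widehat\otimes_{\pi}^{m-1}\ell_1\simeq\ell_1$, apply the bilinear hypothesis to the associated bilinear map, and reduce everything to controlling the weak $p$-norm of the tensor sequence in the projective tensor product. The two-factor estimate you flag as the main obstacle is exactly the result of \cite{BoPe10} that the paper cites at this step, and it does hold for $p\leq 2$ (it follows from the multilinear Grothendieck theorem of \cite{PGVil03} together with the inclusion theorem for multiple summing forms), so your argument is complete once that citation is supplied.
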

\begin{proof}

 Let $T\in\mathcal{L}\left(  ^{m}\ell_{1},\ldots,\ell_1,E;F\right).$  Let
 	$(x_{j}(k))_j\in \ell^w_{p}(\ell_{1})$ for all $k=1,\ldots,m-1$ and  $(y_{j})_j\in
 	\ell^w_{p}(E).$ By \cite{BoPe10} we know that
 	\[
 	\left(  x_{j_{1}}{(1)}\otimes\cdots\otimes x_{j_{m-1}}%
 	{(m-1)}\right)  _{j_{1},\ldots,j_{m-1}=1}^{\infty}\in \ell^w_{p}(\otimes_{\pi}%
 	^{m-1}\ell_{1})\subseteq \ell^w_{s}(\otimes_{\pi}^{m-1}\ell_{1}).
 	\]
 	Let $\widetilde{T}:\left(  \widehat\otimes_{\pi}^{m-1}\ell_{1}\right)  \times
 	E\rightarrow F$ be the bilinear map associated to $T.$ Since $\widehat\otimes_{\pi
 	}^{m-1}\ell_{1}=\ell_{1}$, by hypothesis $\widetilde{T}$ is multiple $\left(
 	r;s\right)  $-summing. Thus%
 	\[
 	\left(  T\left(  x_{j_{1}}{(1)},\ldots,x_{j_{m-1}}{(m-1)},y_{j_{m}}\right)
 	\right)  _{j_{1},\ldots, j_{m}=1}^{\infty}\]
\[=\left(  \widetilde{T}\left(  x_{j_{1}%
 	}{(1)}\otimes\cdots\otimes x_{j_{m-1}}{(m-1)},y_{j_{m}}\right)
 	\right)  _{j_{1},\ldots ,j_{m}=1}^{\infty}\subseteq \ell_{r}(F).
 	\]

\end{proof}

We now show half of the theorem announced in the introduction.

\begin{theorem}\label{thm:multiplel1}
 Let $m\geq 1$, $Z$ a cotype 2 space. Then 
 $$\mathcal L(^{m+1}\ell_1,\dots,\ell_1,Z;\mathbb K)=\Pi^{mult}_{1,1}(^{m+1} \ell_1,\dots,\ell_1,Z;\mathbb K).$$ 
\end{theorem}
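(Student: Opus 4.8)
The plan is to reduce everything to the bilinear case and then to prove that case by pushing the cotype-$2$ structure of $Z$ into a Hilbert space, where Grothendieck's theorem becomes available.

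First I would dispose of the inductive step. For $m\ge 2$ I would apply Theorem~\ref{thm:repeatedl1} with $E=Z$, $F=\mathbb K$ and $r=s=1$ (so that $p=\min(s,2)=1$): once the bilinear coincidence $\mathcal L(^2\ell_1,Z;\mathbb K)=\Pi^{mult}_{1,1}(^2\ell_1,Z;\mathbb K)$ is known, Theorem~\ref{thm:repeatedl1} upgrades it to $\mathcal L(^{m+1}\ell_1,\dots,\ell_1,Z;\mathbb K)=\Pi^{mult}_{1,1}(^{m+1}\ell_1,\dots,\ell_1,Z;\mathbb K)$ for every $m\ge 2$. Since the case $m=1$ of the statement is exactly this bilinear coincidence, the whole theorem follows from the single assertion: every bounded bilinear form $T:\ell_1\times Z\to\mathbb K$ is multiple $(1,1)$-summing, with $\pi^{mult}_{1,1}(T)\le C\|T\|$.

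So I would fix $T:\ell_1\times Z\to\mathbb K$ and write $T(x,z)=\sum_n x_n\phi_n(z)$, where $\phi_n:=T(e_n,\cdot)\in Z^*$ and $\sup_n\|\phi_n\|=\|T\|$; equivalently I consider $u:\ell_1\to Z^*$, $ue_n=\phi_n$, with $\|u\|=\|T\|$. Take finitely supported weakly-$1$-summable families $(x_j)\subset\ell_1$ and $(z_k)\subset Z$ (finite families suffice to estimate the summing norm). I encode the second family by the operator $\Psi:c_0\to Z$, $\Psi e_k=z_k$, whose norm equals $w_1((z_k))$; since $(z_k)$ is finitely supported this is an operator on a finite-dimensional $C(K)$-space. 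The decisive point is that, because $Z$ has cotype $2$, every operator from a $C(K)$-space into $Z$ factors through a Hilbert space (Maurey's factorization theorem); applying this to $\Psi$ produces a Hilbert space $H$ and a factorization $\Psi=\beta\alpha$ with $\alpha:c_0\to H$, $\beta:H\to Z$ and $\|\alpha\|\,\|\beta\|\le C_Z\,w_1((z_k))$, where $C_Z$ depends only on the cotype-$2$ constant of $Z$. I expect this factorization to be the heart of the matter and the main obstacle, as it is the only place where the cotype hypothesis is genuinely used; everything else is formal.

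With the factorization in hand the estimate is short. Put $h_k:=\alpha e_k\in H$ and $y_j:=\beta^{*}ux_j\in H$. Since $(e_k)$ is weakly-$1$-summable in $c_0$ with constant $1$, the sequence $(h_k)$ is weakly-$1$-summable in $H$ with $w_1((h_k))\le\|\alpha\|$. Because $z_k=\beta h_k$ one has $T(x_j,z_k)=\langle ux_j,\beta h_k\rangle=\langle y_j,h_k\rangle_H$, so that, using for each fixed $j$ that $\sum_k|\langle y_j,h_k\rangle_H|\le\|y_j\|_H\,w_1((h_k))$,
\[
\sum_{j,k}|T(x_j,z_k)|=\sum_{j,k}|\langle y_j,h_k\rangle_H|\le w_1((h_k))\sum_j\|y_j\|_H\le\|\alpha\|\sum_j\|\beta^{*}ux_j\|_H .
\]
Now $\beta^{*}u:\ell_1\to H$ maps $\ell_1$ into a Hilbert space, so Grothendieck's theorem gives $\sum_j\|\beta^{*}ux_j\|_H\le K_G\|\beta^{*}u\|\,w_1((x_j))\le K_G\|\beta\|\,\|T\|\,w_1((x_j))$. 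Combining the two estimates yields $\sum_{j,k}|T(x_j,z_k)|\le K_G\|\alpha\|\,\|\beta\|\,\|T\|\,w_1((x_j))\le K_GC_Z\|T\|\,w_1((x_j))\,w_1((z_k))$, which is exactly the desired bilinear coincidence. The ``specific property of $\ell_1$'' used here is precisely Grothendieck's theorem that operators from $\ell_1$ into a Hilbert space are absolutely $(1,1)$-summing; the sole role of the cotype-$2$ hypothesis on $Z$ is to linearize $(z_k)$ through a Hilbert space so that this property becomes applicable.
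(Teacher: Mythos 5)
Your proof is correct, and after the reduction step (which is identical to the paper's: both invoke Theorem~\ref{thm:repeatedl1} to pass from the bilinear case to general $m$) it follows a genuinely different route through the bilinear case, even though both arguments rest on the same two deep ingredients: Maurey's theorem that operators from a $C(K)$-space into a cotype-$2$ space are $2$-summing, and Grothendieck's theorem. The paper argues softly: by \cite[Proposition 2.2]{DJT} it suffices to show that $S=T(u_1,u_2)$ is multiple $(1,1)$-summing for arbitrary $u_1:c_0\to\ell_1$ and $u_2:c_0\to Z$; then $u_2\in\Pi_{2,2}$ (Maurey), $u_1^*\in\Pi_{2,2}$ (Grothendieck in its ``$C(K)\to\ell_1$ is $2$-summing'' form), the composition theorem $\Pi_2\circ\mathcal L\circ\Pi_2\subset\Pi_1$ applied to the linearization $S_2=u_1^*\circ T_2\circ u_2$, and finally \cite[Proposition 2.2]{PGVil03} to transfer $1$-summability of $S_2$ back to multiple summability of $S$. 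You instead work directly with the definition of multiple summability: you encode the weak-$\ell_1$ sequence in $Z$ by the operator $\Psi:c_0\to Z$, invoke Maurey together with the Pietsch factorization to write $\Psi=\beta\alpha$ through a Hilbert space, and then use Grothendieck in its dual form (operators $\ell_1\to H$ are $1$-summing) on $\beta^*u$, finishing with an elementary duality estimate; the two appeals to Grothendieck in the two proofs are equivalent dual statements. What your version buys: it bypasses both transfer propositions (\cite[Proposition 2.2]{DJT} and \cite[Proposition 2.2]{PGVil03}), yields the explicit constant $\pi^{mult}_{1,1}(T)\le K_G C_Z\|T\|$, and makes completely transparent that cotype $2$ enters only through the Hilbert-space factorization of the sequence in $Z$. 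What the paper's version buys: given the cited machinery it is shorter, and phrasing the key step as $\Pi_2\circ\Pi_2\subset\Pi_1$ acting on linearizations is the formulation one would try to iterate or generalize (e.g., replacing $2$-summing by other ideals).
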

\begin{proof}
Thanks to Theorem \ref{thm:repeatedl1}, we may assume that $m=1$. Using \cite[Proposition 2.2]{DJT}, it suffices to show that for any continuous linear operators $u_{1}%
:c_{0}\rightarrow \ell_{1}$ and $u_{2}:c_{0}\rightarrow Z$ the map
$S:=T(u_{1},u_{2})$ is multiple $(1,1)$-summing.

Since $Z$ has cotype $2$, by \cite[Proof of Theorem 11.14]{DJT} we have
$u_{2}\in\Pi_{2,2}(c_{0},Z)$ and by Grothendieck's theorem, $u_{1}^{\ast}\in\Pi_{2,2}(\ell_{\infty},\ell_{1})$.
Let $S_2:c_{0}\rightarrow\mathcal{L}(c_{0},\mathbb{K})=\ell_{1}$ and $T_2:\ell_2\rightarrow\mathcal{L}(\ell_1,\mathbb{K})=\ell_{\infty}$ be the linear operators associated to $S$ and $T$ by the formulas
 $S_{2}(y)(x):=S(x,y)$ and $T_2(y)(x):=T(x,y)$. Since $S_{2}=u_{1}^{\ast}\circ T_{2}\circ u_{2}$, by the composition theorem $S_{2}\in\Pi
_{1,1}(c_{0},\ell_{1})$. Finally, \cite[Proposition 2.2]{PGVil03} allows us to conclude that $S$, hence $T$, are $(1,1)$-multiple summing.
\end{proof}

In particular, the previous theorem shows that any bilinear form on $\ell_1\times\ell_p$ with $1\leq p\leq 2$ is $(1,1)$-multiple summing.
On the other hand, Littlewood's theorem says that all bilinear forms on any $X\times Y$ is $(4/3,1)$-summing and a standard application 
of the Kahane-Salem-Zygmund inequality (see, for instance, \cite{ABPS}) shows that this cannot be improved if $X=\ell_p$ and $Y=\ell_q$ for $p,q\geq 2$. 
Thus it is natural to study the best (=smallest) $r$ such that any bilinear form on $\ell_p\times \ell_q$ is $(r,1)$-summing, or more
generally the best $r$ such that any $m$-linear form on $\ell_{p_1}\times\cdots\times \ell_{p_m}$ is $(r,1)$-summing. 

Unfortunately, it does not seem that interpolation works. For instance, interpolating between $\ell_1\times\ell_2$ and $\ell_1\times\ell_\infty$
seems difficult because $[\ell_1^{w,0}(\ell_2),\ell_1^{w,0}(\ell_\infty)]_\theta=[\ell_1\widehat\otimes_\veps \ell_2,\ell_1\widehat\otimes_\veps\ell_\infty]_\theta\neq l_1^{w,0}(\ell_p)$
for the appropriate $p$. 

%We recall that, so far,  interpolation formulas are known %\cite{Ko}  for $2$-fold injective tensor products under %certain conditions on the spaces. In particular, if %$(X_0,X_1)$ is an interpolation pair with $X_0$ and $X_1$ %being $2$-concave Banach lattices  or $X_0^*$ and $X_1^*$ %being type $2$ spaces then using  %$\ell_p^{w,0}(X_i)=\ell_p\widehat\otimes_\varepsilon X_i$ it %follows that %$[\ell_p^w(X_0),\ell_p^w(X_1)]_\theta=\ell_p^{w,0}([X_0,X_1]_\%theta)$. This was applied in the proof of Theorem %\ref{genint}. However, the situation changes dramatically %when dealing with injective $m$-fold tensor product with %$m\geq 3$ as shown in Theorem \ref{thm:interpolationtensor}.

At least, Theorem \ref{thm:optimultiple} below will show that, for all $p\geq 2$, the smallest $r\geq 1$ such that any bilinear form on $\ell_1\times\ell_p$ is multiple $(r,1)$-summing
satisfies $r\geq \frac{4p}{3p+2}$. 

Using the notion of coordinatewise summability, we can also solve the problem for $(m+1)$-linear forms on $\ell_1\times\ell_2\times\cdots\times\ell_2$.

\begin{proposition}
 Let $m\geq 1$. Then every $(m+1)$-linear form on $\ell_1\times\ell_2\times\cdots\times\ell_2$ is $\left(\frac{2m}{m+1},1\right)$-multiple summing, and this value
 is optimal.
\end{proposition}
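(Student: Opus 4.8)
The plan is to pass to the operator formulation of multiple summability and to treat the $\ell_1$-slot and the $m$ Hilbert slots with different tools, combining them through a coordinatewise (mixed-norm) argument. Recall the isometric identification $\ell_1^w(X)\simeq\mathcal L(c_0,X)$, under which a weakly $1$-summable sequence $(x_i)\subset X$ with $w_1((x_i))\le 1$ is exactly $(u\,e_i)_i$ for a contraction $u\in\mathcal L(c_0,X)$. Writing $T$ for the given $(m+1)$-linear form and $r=\frac{2m}{m+1}$, it follows that $T$ is multiple $(r,1)$-summing if and only if there is a constant $C$ with
\[
\Big(\sum_{\mathbf i}|a_{\mathbf i}|^{r}\Big)^{1/r}\le C\,\|u_1\|\prod_{k=2}^{m+1}\|v_k\|,\qquad a_{\mathbf i}:=T\big(u_1e_{i_1},v_2e_{i_2},\dots,v_{m+1}e_{i_{m+1}}\big),
\]
for all $u_1\in\mathcal L(c_0,\ell_1)$ and all $v_2,\dots,v_{m+1}\in\mathcal L(c_0,\ell_2)$. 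I would fix such operators once and for all and estimate this diagonal $\ell_r$-sum, writing $\mathbf j=(i_2,\dots,i_{m+1})$ for the multi-index over the Hilbert slots.

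The first ingredient is a Hilbert-space Bohnenblust--Hille estimate for the $\ell_2$ block: every $m$-linear form $B$ on $\ell_2\times\cdots\times\ell_2$ is multiple $(\frac{2m}{m+1},1)$-summing, with constant a multiple of $\|B\|$. For $m=1$ this is immediate, and the general case is established by the usual Khinchin/interpolation scheme; a softer Gram--Hilbert--Schmidt estimate (writing $B(x,y)=\langle Mx,y\rangle$ and using that $v_3^*M\colon\ell_2\to\ell_1$ is $2$-summing by Grothendieck, whence $\sum_{i,j}|B(v_2e_i,v_3e_j)|^2=\|v_3^*Mv_2\|_{\mathrm{HS}}^2\le(K_G\|M\|\,\|v_2\|\,\|v_3\|)^2$) already yields the weaker multiple $(2,1)$-summing that I shall also need. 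Freezing the $\ell_1$-index and applying the sharp estimate to $B=T(u_1e_{i_1},\cdot,\dots,\cdot)$, whose norm is at most $\|T\|\,\|u_1e_{i_1}\|_{\ell_1}$, controls $\big(\sum_{\mathbf j}|a_{(i_1,\mathbf j)}|^{r}\big)^{1/r}$ for each fixed $i_1$.

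The second ingredient isolates the special role of $\ell_1$. Writing $C_\ell:=T(e_\ell,\cdot,\dots,\cdot)$ for the uniformly bounded rows of $T$ (so $\sup_\ell\|C_\ell\|=\|T\|$), one has $a_{\mathbf i}=\sum_\ell (u_1e_{i_1})_\ell\,C_\ell(v_2e_{i_2},\dots,v_{m+1}e_{i_{m+1}})$, hence for each fixed $\mathbf j$ the sequence $(a_{(i_1,\mathbf j)})_{i_1}$ equals $u_1^\ast\delta_{\mathbf j}$, where $\delta_{\mathbf j}=\big(C_\ell(v_2e_{i_2},\dots)\big)_\ell\in\ell_\infty$. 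Since $u_1^\ast\colon\ell_\infty\to\ell_1$ is $2$-summing by Grothendieck's theorem, and since each $\phi$ in the unit ball of $(\ell_\infty)^\ast$ produces the $m$-linear Hilbert form $\sum_\ell\phi_\ell C_\ell$ of norm $\le\|T\|$ (to which the multiple $(2,1)$-summing above applies), one gets $w_2\big((\delta_{\mathbf j})_{\mathbf j}\big)\le K\|T\|\prod_k\|v_k\|$, and therefore the mixed estimate $\big(\sum_{\mathbf j}\big(\sum_{i_1}|a_{\mathbf i}|\big)^{2}\big)^{1/2}\le C\,\|u_1\|\prod_k\|v_k\|$.

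The main obstacle is to upgrade these two facts into the diagonal bound with the \emph{exact} exponent $r=\frac{2m}{m+1}$. A row-by-row summation over $i_1$ is too lossy: a Hadamard matrix shows that $\sum_{i_1}\|u_1e_{i_1}\|_{\ell_1}^{r}$ is not controlled by $\|u_1\|^r$ when $r<2$, so the per-row $\ell_r$-estimate must be fed the cancellation across the $\ell_1$-index recorded in the mixed estimate of the previous paragraph. This is exactly where coordinatewise summability enters: I would interleave the per-row $\ell_r$-control over the $m$ Hilbert indices with the $\ell_2(\ell_1)$-control over $(\mathbf j,i_1)$ by a generalized Hölder inequality of anisotropic Bohnenblust--Hille type, the balance $\tfrac1r=\tfrac12+\tfrac1{2m}$ expressing that the $\ell_1$-slot costs summability exponent $1$ while the $m$ Hilbert slots jointly realize the $m$-linear exponent $\frac{2m}{m+1}$. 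Finally, optimality follows from a Kahane--Salem--Zygmund construction: taking $T_N$ an $(m+1)$-linear form on a finite-dimensional $\ell_1\times\ell_2\times\cdots\times\ell_2$ with random unimodular coefficients and suitably normalized evaluation sequences (as in the bilinear estimate $r\ge\frac{4p}{3p+2}$, which for $p=2$ reads $r\ge 1$), the ratio $\big(\sum_{\mathbf i}|T_N(x_{\mathbf i})|^{r}\big)^{1/r}/\prod_j w_1(x(j))$ diverges as $N\to\infty$ for every $r<\frac{2m}{m+1}$; see \cite{ABPS}.
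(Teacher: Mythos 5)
Your two preliminary estimates are essentially sound, modulo one slip: it is \emph{not} true that every operator $v_3^*M\colon\ell_2\to\ell_1$ is $2$-summing by Grothendieck (the normalized Hadamard matrix $\frac1N H_N$ is a contraction $\ell_2^N\to\ell_1^N$ with $\pi_2\geq\sqrt{N}$); what Grothendieck's theorem gives is that the full composition $v_3^*Mv_2\colon c_0\to\ell_1$ is $2$-summing, which is what your Hilbert--Schmidt computation actually needs. The fatal gap is the combining step that you yourself call ``the main obstacle''. No generalized H\"older/Blei-type inequality can produce the exponent $\frac{2m}{m+1}$ from your two estimates, because the implication already fails at the level of scalar arrays. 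Your data are (A) $\sup_{i_1}\big\|(a_{(i_1,\mathbf{j})})_{\mathbf{j}}\big\|_{\frac{2m}{m+1}}\leq C$ and (B) $\big\|\big(\sum_{i_1}|a_{(i_1,\mathbf{j})}|\big)_{\mathbf{j}}\big\|_{2}\leq C$. Formal mixed-norm interpolation of (A) and (B) balances at $\theta=\frac{m+1}{2m+1}$ and yields only the diagonal exponent $\frac{2m+1}{m+1}>\frac{2m}{m+1}$, and this loss is real: already for $m=1$ the array $a_{ij}=1/L$ for $i\leq\sqrt{L}$, $j\leq L$ (and $0$ otherwise) satisfies (A) and (B) with constant $1$, while $\big(\sum_{i,j}|a_{ij}|^r\big)^{1/r}=L^{\frac{3}{2r}-1}\to\infty$ for every $r<\frac32$. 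So your scheme cannot even recover the case $m=1$, where the correct exponent is $1$ (Theorem \ref{thm:multiplel1} with $Z=\ell_2$); that case genuinely needs Grothendieck \emph{twice} together with the composition theorem $\Pi_2\circ\Pi_2\subset\Pi_1$, whereas your ingredient (B) uses $2$-summability of $u_1^*$ only once.

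The missing idea is coordinatewise information stronger than (B), and the paper obtains it by induction on $m$: freezing the $\ell_1$-variable gives (as in your (A)) Hilbert-space forms with the Bohnenblust--Hille exponent $r_1=\frac{2m}{m+1}$, while freezing one $\ell_2$-variable gives, by the induction hypothesis, forms on $\ell_1\times\ell_2\times\cdots\times\ell_2$ (with $m-1$ Hilbert factors) that are multiple $\big(\frac{2(m-1)}{m},1\big)$-summing --- information invisible to your Grothendieck step. These $m+1$ coordinatewise exponents are then fed into the coordinatewise summability theorem of \cite{PopSinna13} (see also \cite{DPS10,BCW}), which yields multiple $(r,1)$-summability with $r=\frac{2R}{m+R}$ and $R=\sum_{k}\frac{r_k}{2-r_k}=m+m(m-1)=m^2$, i.e.\ exactly $r=\frac{2m}{m+1}$. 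For optimality the paper's argument is shorter and complete: if every $(m+1)$-linear form on $\ell_1\times\ell_2\times\cdots\times\ell_2$ were multiple $(r,1)$-summing, then applying this to $S(x,z)=x_1T(z)$ would make every $m$-linear form $T$ on $\ell_2\times\cdots\times\ell_2$ multiple $(r,1)$-summing, forcing $r\geq\frac{2m}{m+1}$ by the known sharpness of the Bohnenblust--Hille exponent on Hilbert spaces; your Kahane--Salem--Zygmund sketch ultimately rests on the same fact but leaves all the norm and weak-norm computations unverified.
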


\begin{proof}
We prove this result by induction. It is true for $m=1$ and assume that it is true until $m-1$. Let 
$T\in\mathcal L(^{m+1} \ell_1,\ell_2,\dots,\ell_2;\mathbb K)$. Let $k\in \{1,\dots,m+1\}$ and $x_k\in \ell_1$ if $k=1$, $x_k\in\ell_2$ otherwise.
Define $T_k(x_k)(y_1,\dots,y_m)\mapsto (y_1,\dots,y_{k-1},x_k,y_k,\dots,y_m)$ for $k=1,\dots,m+1$.
Each $T_k(x_k)$ is a $m-$linear form on $\ell_2\times\cdots\times\ell_2$ if $k=1$, on $\ell_1\times\ell_2\times\dots\times\ell_2$
otherwise. Let $r_1=\frac{2m}{m+1}$ and $r_k=\frac{2(m-1)}{m}$ for $k=2,\dots,m+1$. Then, by the Bohnenblust--Hille inequality \cite{BH31} (for $k=1$) and by the induction hypothesis (otherwise),
we know that each $T_k(x_k)$ is multiple $(r_k,1)-$summing. It follows from the results of \cite{PopSinna13} (see also \cite{DPS10} and \cite{BCW}) that
$T$ is multiple $(r,1)-$summing with 
$$r=\frac{2R}{m+R}\textrm{ where }R=\sum_{k=1}^{m+1}\frac{r_k}{2-r_k}=m^2.$$
The optimality  follows from the fact that the optimal $r$ such that any $m$-linear form on $\ell_2\times\cdots\times\ell_2$ is multiple $(r,1)-$summing
is $\frac{2m}{m+1}$. This cannot be improved if we add one factor, since we can take $S(x,z)=x_1T(z)$ where $x\in\ell_1$, $z\in\ell_2\times\cdots\times\ell_2$ and $T\in\mathcal L(^m\ell_2;\KK)$.
\end{proof}

\begin{remark}
The full bilinear version of Grothendieck's theorem says that any bilinear map from $\ell_1\times\ell_1$ into $\ell_2$ is multiple $(1,1)$-summing. We cannot improve
our result up to this point. Indeed, it is not possible that all bilinear maps from $\ell_1\times\ell_2$ into $\ell_2$ is multiple $(r,1)$-summing for some $r<2$.
This would imply that $Id_{\ell_2}$ is $(r,1)$-summing, which is not the case (see \cite[Theorem 10.5]{DJT}).
\end{remark}

\section{Cotype in summability theory of multilinear maps} \label{sec:cotype}

Since Grothendieck's theorem, coincidence results are very important in summability theory. They show conditions on the Banach 
spaces $X_1,\dots,X_m,Y$ and on $r,s$ in order to have $\mathcal L(^mX_1,\dots,X_m;Y)=\Pi_{r,s}^\Lambda(X_1,\dots,X_m;Y)$. 
It is known that having information on the cotype of the spaces may imply restrictions on the possible indices $r,s$ for such an equality to hold (see for instance 
\cite{bp0} or \cite{bpr0}). Our aim in this section is to provide further results in this direction. They will rely on the following deep result of Maurey and Pisier \cite{MP}:
let $Z$ be an infinite-dimensional Banach space and let $q_Z:=\inf\{q;\ Z$ has cotype $q\}$. Then $\ell_{q_Z}$ is finitely representable in $Z$:
for all $n\geq 1$, there exists $Z_n\subset Z$ and an isomorphism $S_n:\ell_{q_Z}^n\to Z_n$ such that $\|S_n\|\cdot \|S_n^{-1}\|\leq 2$.

Without loss of generality, we may and shall assume that $\|S_n\|\leq 1$. We will set, for $1\leq i\leq n$, $y_i=S_n(e_i)$ and $y_i^*=(S_n^*)^{-1}(e_i)$. We shall also extend $y_i^*$ to the whole $Z$
by the Hahn-Banach theorem. We shall use repeatedly that $\|y_i^*\|\leq 2$, that $1/2\leq\|y_i\|\leq 1$ and the following lemma.

\begin{lemma}
 Let $s\geq 1$ be such that $s^*\geq q_Z$. Then $\omega_{s}\left((y_i)_{i=1}^n\right)\leq n^{\frac 1{q_Z}-\frac1{s^*}}$.
\end{lemma}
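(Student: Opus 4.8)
The plan is to unwind the definition of the weak $\ell_s$-norm through the adjoint of $S_n$ and then reduce everything to the elementary comparison of $\ell_p$-norms on $\mathbb{K}^n$. First I would fix a functional $z^*\in Z^*$ with $\|z^*\|\le 1$ and observe that, since $y_i=S_n(e_i)$, we have $z^*(y_i)=(S_n^*z^*)(e_i)$. Writing $a=S_n^*z^*\in(\ell_{q_Z}^n)^*=\ell_{q_Z^*}^n$, this identifies the inner sum defining $\omega_s$ with an honest finite-dimensional $\ell_s^n$-norm:
\[
\Big(\sum_{i=1}^n|z^*(y_i)|^s\Big)^{1/s}=\|a\|_{\ell_s^n}.
\]

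Next I would control $a$ in the $\ell_{q_Z^*}^n$-norm. Because $\|S_n^*\|=\|S_n\|\le 1$, we get $\|a\|_{\ell_{q_Z^*}^n}=\|S_n^*z^*\|\le\|z^*\|\le 1$. The hypothesis $s^*\ge q_Z$ is exactly what makes the two relevant exponents comparable: conjugation is order-reversing, so $s^*\ge q_Z$ forces $s\le q_Z^*$. Hence the standard nesting inequality for $\ell_p$-norms on $\mathbb{K}^n$, namely $\|a\|_{\ell_s^n}\le n^{1/s-1/q_Z^*}\|a\|_{\ell_{q_Z^*}^n}$ valid for $s\le q_Z^*$, applies and yields $\|a\|_{\ell_s^n}\le n^{1/s-1/q_Z^*}$.

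Taking the supremum over all $z^*$ in the unit ball of $Z^*$ then gives $\omega_s((y_i)_{i=1}^n)\le n^{1/s-1/q_Z^*}$, and a one-line check of exponents, $1/s-1/q_Z^*=1/s-1+1/q_Z=1/q_Z-1/s^*$, rewrites this as the claimed bound $n^{1/q_Z-1/s^*}$. There is no genuine obstacle here: the only points that require care are the bookkeeping for the adjoint $S_n^*$, where one uses the normalization $\|S_n\|\le 1$ (arranged beforehand, rather than merely $\|S_n\|\,\|S_n^{-1}\|\le 2$), and the verification that the hypothesis $s^*\ge q_Z$ is precisely the condition under which the $\ell_s$–$\ell_{q_Z^*}$ comparison runs in the favorable direction. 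Note in particular that the auxiliary bounds on $\|y_i\|$ and $\|y_i^*\|$ are not needed for this estimate.
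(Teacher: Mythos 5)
Your proof is correct and is essentially the paper's argument in transposed form: the paper dualizes the $\ell_s$-norm against $B_{\ell_{s^*}^n}$ and bounds $\sup_{a\in B_{\ell_{s^*}^n}}\left\|\sum_{i=1}^n a_i y_i\right\|=\|S_n a\|$ via the nesting $\|a\|_{\ell_{q_Z}^n}\leq n^{\frac1{q_Z}-\frac1{s^*}}\|a\|_{\ell_{s^*}^n}$, while you apply the same two ingredients (the normalization $\|S_n\|\leq 1$ and the finite-dimensional $\ell_p$-inclusion) to the adjoint $S_n^*z^*$ under the identification $(\ell_{q_Z}^n)^*=\ell_{q_Z^*}^n$. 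The two computations coincide under duality, so no further changes are needed.
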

\begin{proof}
 We just observe that 
 \begin{align*}
\omega_{s}\left( (y_i)_{i=1}^n\right)   & =\sup_{z^*\in B_{Z^*}}\left(\sum_{i=1}^n |\langle z^*,y_i\rangle|^s\right)^{1/s}\\
&\leq \sup_{z^*\in B_{Z^*}}\sup_{a\in B_{\ell_{s^*}^n}}\big|\sum_{i=1}^n a_i \langle z^*,y_i\rangle\big|\\
&\leq \sup_{a\in B_{\ell_{s^*}^n}}\left\|\sum_{i=1}^n a_i y_i\right\|\\
&\leq n^{\frac 1{q_Z}-\frac1{s^*}}.
\end{align*}
\end{proof}

\subsection{Cotype and multiple summability}

	We shall prove Theorem \ref{thm:optimultiple} which assures that Theorem
\ref{thm:multiplel1} is sharp. We need a lemma that in particular gives
an alternative probabilistic proof (perhaps already known) of the famous result
proved in 1947 by MacPhail \cite{mc} showing that in $\ell_{1}$ there is an
unconditionally summable sequence that fails to be absolutely summable (the
ideas of MacPhail were crucial for the development of the Dvoretzky--Rogers
Theorem in 1950):

\begin{lemma}
	\bigskip\label{macp} Let $n\geq 1$.There exists a sequence $(x_i)_{i=1}^n$ of elements in $\ell_{1}^n$, $x_i=\left(  \varepsilon
	_{i,j}\right)  _{j=1,\ldots,n}$,  with $\left\vert \varepsilon
	_{i,j}\right\vert =1$ and $\omega_{1}\left(  \left(  x_i\right)_{i=1}^n  \right)  \leq
	Cn^{3/2}$ for some constant $C.$
\end{lemma}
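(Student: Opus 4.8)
The plan is to construct the sequence by the probabilistic method: I would let $(\veps_{i,j})_{1\le i,j\le n}$ be independent Rademacher variables on some probability space and set $x_i=(\veps_{i,j})_{j=1}^n\in\ell_1^n$, so that automatically $|\veps_{i,j}|=1$. Since $(\ell_1^n)^*=\ell_\infty^n$, the quantity to be controlled is
$$\omega_1\big((x_i)_{i=1}^n\big)=\sup_{\|a\|_\infty\le 1}\sum_{i=1}^n\Big|\sum_{j=1}^n a_j\veps_{i,j}\Big|.$$
The first key observation is that the map $a\mapsto\sum_{i}\big|\sum_j a_j\veps_{i,j}\big|$ is convex, so its supremum over the unit ball of $\ell_\infty^n$ is attained at an extreme point; in the real case these are exactly the $2^n$ sign vectors $a\in\{-1,1\}^n$. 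This reduces a supremum over a continuum to a maximum over a finite set, which is what makes a union bound feasible.

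The second step is a first-moment estimate. Fixing $a\in\{-1,1\}^n$ and using Jensen's inequality, $\mathbb E\big|\sum_j a_j\veps_{i,j}\big|\le\big(\sum_j a_j^2\big)^{1/2}=\sqrt n$, so the random variable $F_a:=\sum_i\big|\sum_j a_j\veps_{i,j}\big|$ has mean at most $n^{3/2}$. The third step is concentration: viewing $F_a$ as a function of the $n^2$ independent signs, flipping a single $\veps_{i,j}$ changes $F_a$ by at most $2|a_j|=2$, so McDiarmid's bounded-difference inequality gives
$$\mathbb P\big(F_a\ge n^{3/2}+t\big)\le\exp\!\Big(-\frac{t^2}{2n^2}\Big).$$
Taking $t=(C-1)n^{3/2}$ yields a deviation probability at most $\exp\big(-\tfrac12(C-1)^2 n\big)$.

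Finally I would take a union bound over the $2^n$ extreme points: the probability that $F_a\ge Cn^{3/2}$ for some $a\in\{-1,1\}^n$ is at most $\exp\big(n\log 2-\tfrac12(C-1)^2 n\big)$, which is strictly less than $1$ as soon as $(C-1)^2>2\log2$ (for instance $C=3$). Hence there is a realization of the signs for which $\omega_1\big((x_i)_{i=1}^n\big)\le Cn^{3/2}$, which is exactly the claim.

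The one delicate point is the reduction of the supremum to a finite set before applying the union bound, and this is where I expect the main obstacle to lie if one wants the complex version. Over complex scalars the extreme points of the $\ell_\infty^n$-ball form the torus $\mathbb T^n$ rather than a finite set, so the clean convexity reduction no longer produces finitely many test functionals; one would instead keep the same concentration estimate for each fixed $a$, combine it with an $\veps$-net of $\mathbb T^n$ (of cardinality of order $(C/\veps)^n$) and use the Lipschitz dependence of $a\mapsto F_a$ to pass from the net back to the full supremum. The exponents still match because the net has only exponentially many points, but this discretization is the main technical subtlety.
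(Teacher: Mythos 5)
Your real-scalar argument is correct and complete, and it is in the same probabilistic spirit as the paper's proof, but the two proofs factor the work differently. The paper first dualizes the sum of absolute values, writing
$$\omega_1\big((x_i)_{i=1}^n\big)=\sup_{\|\zeta\|_\infty\le 1}\sup_{\|w\|_\infty\le 1}\sum_{i,j=1}^n \zeta_j w_i\varepsilon_{i,j},$$
so that the quantity to control becomes the norm of a random bilinear form on $B_{\ell_\infty^n}\times B_{\ell_\infty^n}$, and then invokes the Kahane--Salem--Zygmund inequality as a black box to pick signs making this norm at most $Cn^{3/2}$. You instead keep the absolute values, reduce the supremum to the $2^n$ extreme points of the real ball by convexity, and prove the required concentration from scratch (Jensen for the first moment, McDiarmid for the deviation, union bound over $\{-1,1\}^n$). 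In effect you have reproved the sign-vector instance of Kahane--Salem--Zygmund; this buys a self-contained, elementary proof, at the cost of redoing what the cited inequality encapsulates. All your estimates check out: $\mathbb{E}F_a\le n^{3/2}$, bounded differences equal to $2$, deviation bound $\exp\big(-t^2/(2n^2)\big)$, and $2^n e^{-(C-1)^2n/2}<1$ as soon as $(C-1)^2>2\log 2$.

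One caveat about your closing remark on the complex case (which is relevant, since the paper allows $\mathbb{K}=\mathbb{C}$). As described, your net argument does not quite close: the available Lipschitz bound is $|F_a-F_b|\le n^2\|a-b\|_\infty$, which forces a mesh of order $n^{-1/2}$, hence a net of cardinality about $(c\sqrt n)^{2n}=e^{cn\log n}$; this is super-exponential and overwhelms the concentration bound $e^{-cn}$, so that route only yields $Cn^{3/2}\sqrt{\log n}$. Two standard fixes: (i) exploit that $a\mapsto F_a$ is a seminorm, so for a net $\mathcal N$ of the complex ball with \emph{constant} mesh $\delta$ one has $\sup_{\|a\|_\infty\le 1}F_a\le (1-\delta)^{-1}\max_{b\in\mathcal N}F_b$, and such a net has only $e^{cn}$ points, so the union bound survives; or (ii) skip nets entirely: since your signs are real, any complex $\zeta$ with $\|\zeta\|_\infty\le 1$ splits as $u+iv$ with $u,v$ real in the unit ball, whence the complex supremum is at most twice the real one, and your real-case construction already proves the lemma over $\mathbb{C}$ with constant $2C$.
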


\begin{proof}
	Define $x_i=\left(  \varepsilon_{i,j}\right)  _{j=1,\ldots,n}$, $\left\vert
	\varepsilon_{i,j}\right\vert =1$. We show that we may choose the signs
	$\varepsilon_{i,j}$ so that $\omega_{1}\left(  \left(  x_i\right)_{i=1}^n  \right)  \leq
	Cn^{3/2}.$ In fact,%
	\begin{align*}
	\omega_{1}\left(  \left(  x_i\right)_{i=1}^n  \right)   &  =\sup_{\left\Vert
		\zeta\right\Vert _{\infty}\leq1}\sum\limits_{i=1}^{n}\left\vert \sum
	\limits_{j=1}^{n}\zeta_{j}\varepsilon_{i,j}\right\vert \\
	&  =\sup_{\left\Vert \zeta\right\Vert _{\infty}\leq1}\sup_{\left\Vert
		w\right\Vert _{\infty}\leq1}\sum\limits_{i,j=1}^{n}\zeta_{j}w_{i}%
	\varepsilon_{i,j}.
	\end{align*}
	By the Kahane-Salem-Zygmund inequality we can find $\varepsilon_{i,j}%
	\in\{-1,1\}$ such that%
	\[
	\sup_{\left\Vert \zeta\right\Vert _{\infty}\leq1}\sup_{\left\Vert w\right\Vert
		_{\infty}\leq1}\sum\limits_{i,j=1}^{n}\zeta_{j}w_{i}\varepsilon_{i,j}\leq
	Cn^{3/2}.
	\]
	
\end{proof}

As we mentioned, the above lemma provides an alternative proof of MacPhail's theorem, since it
is immediate from the lemma that $Id_{\ell_{1}}$ cannot be absolutely $\left(
r;1\right)  $-summing for $r<2.$

\begin{theorem}
	\label{thm:optimultiple} Let $m\geq1$, $Z$ be an infinite-dimensional Banach
	space and $r\geq1$. Assume that $\mathcal{L}(^{m+1}\ell_{1},\dots,\ell
	_{1},Z;\mathbb{K})=\Pi_{r,1}^{mult}(^{m+1}\ell_{1},\dots,\ell_{1}%
	,Z;\mathbb{K}).$ Then $r\geq\frac{4q_{Z}}{3q_{Z}+2}$.
\end{theorem}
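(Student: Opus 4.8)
The plan is to disprove the coincidence whenever $r<\frac{4q_Z}{3q_Z+2}$ by producing, for each $n$, a norm-bounded $(m+1)$-linear form together with test sequences whose weak $\ell_1$-norms are small but whose associated $\ell_r$-sum is large. Since a coincidence $\mathcal L=\Pi_{r,1}^{mult}$ automatically carries a uniform constant $C$ (closed graph theorem), it suffices to violate the inequality $\big(\sum_{\mathbf i}|T(\cdots)|^r\big)^{1/r}\le C\prod_k\omega_1(x(k))$ asymptotically in $n$.

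First I would assemble the ingredients. By the Maurey--Pisier theorem quoted above, fix $n$ and take $y_i=S_n(e_i)$ together with the Hahn--Banach extensions $y_i^*=(S_n^*)^{-1}(e_i)$, so that $\langle y_j^*,y_l\rangle=\delta_{jl}$, $\|y_i^*\|\le 2$, and, by the lemma above applied with $s=1$ (so that $s^*=\infty\ge q_Z$), $\omega_1\big((y_i)_{i=1}^n\big)\le n^{1/q_Z}$. For the $\ell_1$-data I would use the MacPhail vectors $x_k=(\varepsilon_{k,j})_{j=1}^n$ of Lemma \ref{macp}, which satisfy $\omega_1\big((x_k)_{k=1}^n\big)\le Cn^{3/2}$.

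The crucial choice is the form itself. I would take the diagonal bilinear form $T_2(x,w)=\sum_{j=1}^n x_j\langle y_j^*,w\rangle$ on $\ell_1\times Z$ and absorb the remaining $m-1$ copies of $\ell_1$ trivially, setting
\[T\big(x(1),\dots,x(m),w\big)=\langle e_1^*,x(1)\rangle\cdots\langle e_1^*,x(m-1)\rangle\,T_2\big(x(m),w\big).\]
From $|x_j|\le\|x\|_1$ and $\|y_j^*\|\le 2$ one reads off at once that $\|T\|\le 2$, uniformly in $n$. Now feed the $(x_k)_k$ into the factor $x(m)$, the $(y_l)_l$ into $Z$, and the sequence $(e_1,0,0,\dots)$ (of weak $\ell_1$-norm $1$) into each of the first $m-1$ factors. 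The first-coordinate functionals kill every term except $i_1=\cdots=i_{m-1}=1$, while $T_2(x_k,y_l)=\sum_j\varepsilon_{k,j}\langle y_j^*,y_l\rangle=\varepsilon_{k,l}$ has modulus $1$ for every pair $(k,l)$. Hence the left-hand side equals $\big(\sum_{k,l=1}^n 1\big)^{1/r}=n^{2/r}$, whereas the right-hand side is at most $C\cdot 1^{m-1}\cdot Cn^{3/2}\cdot n^{1/q_Z}$.

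Comparing exponents as $n\to\infty$ forces $\tfrac{2}{r}\le\tfrac32+\tfrac1{q_Z}=\tfrac{3q_Z+2}{2q_Z}$, which is exactly $r\ge\frac{4q_Z}{3q_Z+2}$. The step I expect to need the most care is not the norm estimate (immediate from the $\ell_1$-geometry) but the \emph{choice of $\ell_1$ test sequence}: using the canonical basis $(e_k)$ would leave only the diagonal terms alive and yield the far weaker bound $r\ge q_Z/(q_Z+1)$; it is precisely the MacPhail vectors that, at the modest cost of raising the weak $\ell_1$-norm from $n$ to $n^{3/2}$, make all $n^2$ entries $T_2(x_k,y_l)$ nonzero and so produce the sharp exponent. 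Finally, collapsing the $m-1$ extra factors to single first-coordinate evaluations is what makes the bound independent of $m$, matching the sharpness of Theorem \ref{thm:multiplel1}.
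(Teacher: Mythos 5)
Your proof is correct and follows essentially the same route as the paper: the diagonal form $\sum_i x_i\langle y_i^*,\cdot\rangle$ built from the Maurey--Pisier vectors, tested against the MacPhail sequence and $(y_l)_l$, yielding $n^{2/r}\le Cn^{3/2+1/q_Z}$ and hence $r\ge\frac{4q_Z}{3q_Z+2}$. The only cosmetic difference is that the paper simply declares the reduction to $m=1$, whereas you make it explicit by tensoring with first-coordinate functionals $\langle e_1^*,\cdot\rangle$ and feeding $(e_1,0,0,\dots)$ into the extra slots, which is exactly the intended justification.
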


\begin{proof}
	We need only to consider the case $m=1$ and we suppose that every bounded
	bilinear form $T:\ell_{1}\times Z\rightarrow\mathbb{K}$ is $(r,1)$-multiple
	summing. Let $n\geq1$ and let $Z_{n},S_{n},(y_{i})_{1\leq i\leq n}$ and
	$(y_{i}^{\ast})_{1\leq i\leq n}$ as above. Let us now consider
	\begin{align*}
	T:\ell_{1}\times Z &  \rightarrow\mathbb{K}\\
	\left(  x,y\right)   &  \mapsto\sum_{i=1}^{n}x_{i}\langle y_{i}^{\ast
	},y\rangle
	\end{align*}
	which satisfies $\Vert T\Vert\leq2$. We choose a sequence $(x_i)_i\in\ell_{1}$
	as in Lemma \ref{macp}. %On the other hand, define $(y(i))=(y_{i})$.
 It follows
	that
	\begin{align*}
	n^{2/r} &  =\left(  \sum\limits_{i,j=1}^{n}\left\vert T\left(
	x_i,y_j\right)  \right\vert ^{r}\right)  ^{1/r}\leq C\omega_{1}\left(
	(x_i)_{i=1}^n\right)  \omega_{1}\left(  (y_j)_{j=1}^n\right)  \\
	&  \leq Cn^{3/2}n^{1/{q_{Z}}}%
	\end{align*}
	and since $n$ is arbitrary%
	\[
	r\geq\frac{4q_{Z}}{3q_{Z}+2}.
	\]
	
\end{proof}

\begin{remark}
Consider $1<p\leq 2$, $Z=\ell_{p^*}$ and take $S\in \mathcal L(^{m+1}\ell_1,\ldots,\ell_1,\ell_{p^*};\mathbb K)$. The $m$-linear operator $S_m:\ell_1\times \cdots\times \ell_1\to\ell_p$ given by $S_m(x_1,\ldots,x_m)(x)=S(x_1,\ldots,x_m,x)$, $x_1,\ldots,x_m\in\ell_1$, $x\in\ell_{p^*}$, is continuous. Hence, by  \cite[Corollary 4.3]{BoPe10} 
$$S_m\in \mathcal L(^{m}\ell_1,\ldots,\ell_1;\ell_p)=\Pi_{s,1}^{mult} (^m\ell_1,\ldots,\ell_1;\ell_p)$$ for any $s\geq \frac{2p}{3p-2}$. From \cite[Proposition 2.2]{PGVil03}, it follows that $S\in \Pi_{s,1}^{mult} (^{m+1}\ell_1,\ldots,\ell_1,\ell_{p^*};\mathbb K)$. In that case, 
$$
\frac{4q_{\ell_{p^*}}}{3q_{\ell_{p^*}}+2}=\frac{4p^*}{3p^*+2}=\frac{4p}{5p-2}<\frac{2p}{3p-2}.
$$
It is of interest to know if there is $\frac{4q_{\ell_{p^*}}}{3q_{\ell_{p^*}}+2}\leq r<\frac{2p}{3p-2}$ and a $m+1$-linear form $T\in \mathcal L(^{m+1}\ell_1,\ldots,\ell_1,\ell_{p^*};\mathbb K)$ such that $T$ is not $(r,1)$-multiple summing.
\end{remark}

In Theorem \ref{thm:multiplel1} we have proved that if $Z$ is a cotype 2 space then $\mathcal L(^{m+1}\ell_1,\ldots,\ell_1,Z;\mathbb K)=\Pi_{1,1}^{mult} (^{m+1}\ell_1,\ldots,\ell_1, Z;\mathbb K)$. Theorem \ref{thm:optimultiple} gives a partial converse:

\begin{corollary}
Let $Z$ be a Banach space and $m\in \mathbb N$. If $\mathcal L(^{m+1}\ell_1,\ldots,\ell_1,Z;\mathbb K)=\Pi_{1,1}^{mult} (^{m+1}\ell_1,\ldots,\ell_1, Z;\mathbb K)$ then $q_Z= 2$.
\end{corollary}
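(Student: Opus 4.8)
The plan is to read off the conclusion directly from Theorem~\ref{thm:optimultiple}, which was designed precisely to give a quantitative lower bound on the multiple summing exponent in terms of $q_Z$. First I would recall that, by the very definition $q_Z=\inf\{2\leq q\leq\infty:\ Z\text{ has cotype }q\}$, one always has $q_Z\geq 2$; thus only the reverse inequality $q_Z\leq 2$ requires argument, and the corollary amounts to squeezing $q_Z$ between $2$ and $2$.

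Assume then that $Z$ is infinite-dimensional (the finite-dimensional case being trivial, since every finite-dimensional Banach space has cotype $2$, whence $q_Z=2$ automatically). The hypothesis of the corollary is exactly the coincidence
\[
\mathcal L(^{m+1}\ell_1,\ldots,\ell_1,Z;\mathbb K)=\Pi_{1,1}^{mult}(^{m+1}\ell_1,\ldots,\ell_1,Z;\mathbb K),
\]
that is, the coincidence appearing in Theorem~\ref{thm:optimultiple} specialized to the value $r=1$. Applying that theorem with $r=1$ yields
\[
1\geq\frac{4q_Z}{3q_Z+2}.
\]

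It remains only to unwind this estimate: clearing denominators gives $3q_Z+2\geq 4q_Z$, hence $q_Z\leq 2$. Combined with the universal bound $q_Z\geq 2$ noted above, this forces $q_Z=2$, as claimed. I do not anticipate any genuine obstacle here, as the entire content of the corollary is already packaged inside Theorem~\ref{thm:optimultiple}; the only point requiring care is the standing hypothesis that $Z$ be infinite-dimensional, which is why I separate off the trivial finite-dimensional case at the very beginning.
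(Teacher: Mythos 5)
Your proof is correct and follows essentially the same route as the paper: apply Theorem~\ref{thm:optimultiple} with $r=1$ to obtain $1\geq \frac{4q_Z}{3q_Z+2}$, i.e.\ $q_Z\leq 2$, and combine this with the definitional bound $q_Z\geq 2$. Your explicit separation of the finite-dimensional case is a small point of extra care the paper leaves implicit (Theorem~\ref{thm:optimultiple} does assume $Z$ infinite-dimensional), but it does not change the argument.
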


\begin{proof}
It follows from Theorem \ref{thm:optimultiple} as $1\geq \frac{4q_{Z}}{3q_{Z}+2}$ if and only if $q_Z\leq 2$.
\end{proof}

\subsection{Cotype and absolute summability}

Several results already appeared in the literature linking cotype and absolute summability. We shall use the following result of Botelho \cite{bot} which gives sufficient conditions for a multilinear map to be absolutely summing under cotype conditions.

\begin{lemma}\label{lem:botelho}
Let $Z_1,\dots,Z_m,W$ be Banach spaces. 
\begin{enumerate}
\item Assume that each $Z_i$ has cotype $q_i$. Then $\mathcal L(^mZ_1,\dots,Z_m;W)=\Pi_{r,1}^{abs}(^m Z_1,\dots,Z_m;W)$ as soon as $\frac 1r\leq \sum_{j=1}^m \frac 1{q_j}$.
\item Assume that $W$ has cotype $q$. Then $\mathcal L(^mZ_1,\dots,Z_m;W)=\Pi_{q,1}^{abs}(^ m Z_1,\dots,Z_m;W)$.
\end{enumerate}
\end{lemma}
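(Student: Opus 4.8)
The plan is to treat the two parts separately, both resting on the elementary observation that a cotype inequality, once written with Rademacher averages, is controlled by the weak-$\ell_1$ norm: for any finite family $(v_i)$ in a Banach space one has $\big(\mathbb E_\varepsilon\|\sum_i\varepsilon_iv_i\|^2\big)^{1/2}\le\sup_\varepsilon\|\sum_i\varepsilon_iv_i\|\le w_1((v_i)_i)$, since every sign sequence is admissible in the supremum $\sup_{|a_i|\le 1}\|\sum_i a_ix_i\|$ defining the weak-$\ell_1$ norm. Part (1) exploits the cotype of the \emph{domains} through the single-variable instance of this fact, while part (2) exploits the cotype of the \emph{range} and then requires a genuinely multilinear diagonal estimate.

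For part (1) I would first record the linear statement that a space $Z$ of cotype $q$ has $\mathrm{id}_Z\in\Pi_{q,1}$: combining the cotype inequality with the comparison above gives, for finite sequences, $\big(\sum_i\|x_i\|^q\big)^{1/q}\le C_q\big(\mathbb E_\varepsilon\|\sum_i\varepsilon_ix_i\|^2\big)^{1/2}\le C_q\,w_1((x_i)_i)$ (one passes to infinite sequences by truncation, i.e. by working in $\ell_1^{w,0}$). Applying this to each $Z_j$, I would bound the multilinear map pointwise by $\|T(x_i(1),\dots,x_i(m))\|\le\|T\|\prod_{j=1}^m\|x_i(j)\|$ and invoke Hölder's inequality with the exponents $p_j=q_j/r$, which satisfy $\sum_j 1/p_j=r\sum_j 1/q_j=1$ and $p_j\ge 1$ exactly when $1/r=\sum_j 1/q_j$. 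This yields $\big(\sum_i\|T(x_i(1),\dots,x_i(m))\|^r\big)^{1/r}\le\|T\|\prod_j\big(\sum_i\|x_i(j)\|^{q_j}\big)^{1/q_j}\le\|T\|\prod_jC_{q_j}w_1(x(j))$. The case of a general $r$ with $1/r\le\sum_j1/q_j$ reduces to the equality case through the inclusion $\ell_{r_0}\hookrightarrow\ell_r$, $r_0\le r$.

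For part (2), writing $v_i=T(x_i(1),\dots,x_i(m))$ and applying the cotype $q$ inequality of $W$ together with the comparison above gives $\big(\sum_i\|v_i\|^q\big)^{1/q}\le C_q\,w_1\big((v_i)_i\big)$, so it remains to prove the multilinear weak-summing estimate $w_1\big((T(x_i(1),\dots,x_i(m)))_i\big)\le C\,\|T\|\prod_j w_1(x(j))$. I would reduce this to a diagonal estimate on $c_0$: setting $u_j\colon c_0\to Z_j$, $u_j(e_i)=x_i(j)$, one has $\|u_j\|=w_1(x(j))$, and $\psi:=(w^*\circ T)\circ(u_1\times\cdots\times u_m)$ is a bounded $m$-linear form on $c_0\times\cdots\times c_0$ with $\|\psi\|\le\|T\|\prod_jw_1(x(j))$; thus everything comes down to showing that $\sum_i|\psi(e_i,\dots,e_i)|\le\|\psi\|$ for every bounded $m$-linear form $\psi$ on $c_0\times\cdots\times c_0$. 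This diagonal extraction is the one non-routine point, and I would carry it out by the torus-averaging device already used in the proof of Theorem~\ref{thm:interpolationtensor}: choosing unimodular scalars $\delta_i$ with $\delta_i\psi(e_i,\dots,e_i)=|\psi(e_i,\dots,e_i)|$ and i.i.d. variables $\lambda_i$ uniform on $\mathbb T$, one evaluates $\psi$ at $\big(\delta_i\bar\lambda_i^{\,m-1}\big)_i$ in the first slot and $(\lambda_i)_i$ in each remaining slot; orthogonality of the characters annihilates every off-diagonal monomial and leaves $\mathbb E_\lambda[\psi(\cdots)]=\sum_i|\psi(e_i,\dots,e_i)|$, while each individual evaluation is bounded by $\|\psi\|$ because all arguments are unimodular. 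In the real scalar case one first complexifies, at the cost of an inessential constant.

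The main obstacle is precisely this diagonal estimate in part (2); everything else is Hölder's inequality combined with the elementary domination of an averaged cotype expression by the weak-$\ell_1$ norm, and part (1) is entirely elementary. I would also note that the constant produced in part (2) is $C_q\|T\|$ (times the complexification constant over $\mathbb R$), which is all that is needed for the coincidence $\mathcal L=\Pi_{q,1}^{abs}$, so there is no need to optimize it.
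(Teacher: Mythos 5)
The paper never proves this lemma: it is quoted directly from Botelho \cite{bot} as a known result, so there is no internal proof to compare against. That said, your argument is correct and is essentially the classical one. Part (1) is exactly the standard proof: cotype $q_j$ gives $\mathrm{id}_{Z_j}\in\Pi_{q_j,1}$ via the domination of the Rademacher average by $\omega_1$, and H\"older's inequality with exponents $q_j/r$ assembles the diagonal estimate; the relaxation to $\frac1r\le\sum_j\frac1{q_j}$ by inclusion of $\ell_r$-spaces is fine. Part (2) is also sound, and your key step -- the diagonal estimate $\sum_i|\psi(e_i,\dots,e_i)|\le C\,\|\psi\|$ for $m$-linear forms on $c_0$, obtained by Steinhaus averaging -- is in substance a proof of the scalar Defant--Voigt theorem, which the paper itself invokes (via \cite{AM}) in Section \ref{sec:cotype}; Botelho's original route is the same cotype reduction combined with that theorem, so you have reconstructed rather than bypassed the standard argument. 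Two minor technical points you should patch: the unimodular sequences $\bigl(\delta_i\bar\lambda_i^{\,m-1}\bigr)_i$ and $(\lambda_i)_i$ do not lie in $c_0$, so you must evaluate $\psi$ at their finite truncations $\sum_{i\le N}\delta_i\bar\lambda_i^{\,m-1}e_i$ and $\sum_{i\le N}\lambda_ie_i$ and let $N\to\infty$ (harmless, since the character-orthogonality computation is identical for each fixed $N$); and, as you note, the real case needs complexification, which only affects the constant and not the coincidence of the two classes.
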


Conversely, we shall prove the following result giving a sufficient condition under cotype condition.
\begin{theorem}\label{thm:absolute}
Let $l\geq 0$, $m\geq 1$, $p_1,\dots,p_l\geq 1$, $Z_{l+1},\dots,Z_m$ be infinite-dimensional Banach spaces and let $r>0$ be such that 
$$\mathcal L(^m \ell_{p_1},\dots,\ell_{p_l},Z_{l+1},\dots,Z_m{;W})=\Pi_{r,1}^{abs}(^m \ell_{p_1},\dots,\ell_{p_l},Z_{l+1},\dots,Z_m{;W}).$$
\begin{enumerate}
\item Assume that $W$ is infinite-dimensional. Then 
$$\frac 1r\leq \max\left(\frac 1{q_W}-\sum_{j=1}^l \frac1{p_j},0\right)+\sum_{j=1}^l \frac 1{p_j}+\sum_{j=l+1}^m \frac{1}{q_{Z_j}}.$$
\item Assume that $W$ is finite-dimensional. Then 
$$\frac 1r\leq \max\left(1-\sum_{j=1}^l \frac1{p_j},0\right)+\sum_{j=1}^l \frac 1{p_j}+\sum_{j=l+1}^m \frac{1}{q_{Z_j}}.$$
\end{enumerate}
\end{theorem}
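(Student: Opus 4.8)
The plan is to prove the stated necessary condition on $r$ directly: assuming the coincidence $\mathcal L=\Pi_{r,1}^{abs}$ holds, the closed graph theorem furnishes a constant $C>0$ with $\pi_{r,1}^{abs}(T)\le C\|T\|$ for every $T$, and I will exhibit, for each $n$, an explicit map $T_n$ together with explicit finite sequences so that the $(r,1)$-summing inequality forces $n^{1/r}\lesssim n^{E}$, where $E$ is the right-hand side to be proved. Letting $n\to\infty$ and taking logarithms will then yield $1/r\le E$. Throughout I write $\beta:=\sum_{j=1}^{l}1/p_j$.

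First I set up the finite-dimensional model. For each infinite-dimensional factor $Z_j$ ($l+1\le j\le m$) I invoke Maurey--Pisier to get $y_i^{(j)}=S_n^{(j)}(e_i)\in Z_j$ and $y_i^{*(j)}=((S_n^{(j)})^*)^{-1}(e_i)$ (extended by Hahn--Banach) with $\langle y_k^{*(j)},y_i^{(j)}\rangle=\delta_{ki}$, $1/2\le\|y_i^{(j)}\|\le1$, and $\|y_i^{*(j)}\|\le2$. For the $\ell_{p_j}$ factors ($1\le j\le l$) I simply take the unit vectors $e_i$; one computes $\omega_1((e_i)_{i=1}^n)=n^{1/p_j}$ in $\ell_{p_j}$, while the lemma preceding this subsection gives $\omega_1((y_i^{(j)})_{i=1}^n)\le n^{1/q_{Z_j}}$. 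These weak norms will contribute the factors $n^{\sum_{j=1}^l 1/p_j}$ and $n^{\sum_{j=l+1}^m 1/q_{Z_j}}$.

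For part (1), where $W$ is infinite-dimensional, I apply Maurey--Pisier once more to $W$ to obtain $w_i=S_n^W(e_i)$ with $\tfrac12\|a\|_{q_W}\le\|\sum_i a_iw_i\|\le\|a\|_{q_W}$, and I define
\[
T_n(x^{(1)},\dots,x^{(l)},z^{(l+1)},\dots,z^{(m)})=\sum_{i=1}^n\Big(\prod_{j=1}^l x_i^{(j)}\Big)\Big(\prod_{j=l+1}^m\langle y_i^{*(j)},z^{(j)}\rangle\Big)\,w_i .
\]
Evaluating at the $i$-th diagonal element (inputs $e_i$ on the $\ell_{p_j}$-slots, $y_i^{(j)}$ on the $Z_j$-slots) collapses the sum by biorthogonality to $T_n(\mathrm{diag}_i)=w_i$, so $\|T_n(\mathrm{diag}_i)\|\ge\tfrac12$ and the left side of the summing inequality is $\ge\tfrac12 n^{1/r}$. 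The decisive estimate is the operator norm. Writing $b_i=\prod_{j=1}^l x_i^{(j)}$ and using $|\langle y_i^{*(j)},z^{(j)}\rangle|\le2$, I get $\|T_n(\cdots)\|\le 2^{m-l}\|b\|_{q_W}$, so it remains to bound $\sup\|b\|_{q_W}$ over unit inputs. By the generalized Hölder inequality the pointwise product satisfies $\|b\|_{1/\beta}\le1$ when $\beta\le1$ and $\|b\|_1\le1$ when $\beta\ge1$; comparing the $\ell_{q_W}$- and $\ell_{1/\beta}$-norms on $\CC^n$ then yields $\|b\|_{q_W}\le n^{\max(0,\,1/q_W-\beta)}$, hence $\|T_n\|\le 2^{m-l}n^{\max(0,\,1/q_W-\beta)}$. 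Feeding this and the weak-norm bounds into $\tfrac12 n^{1/r}\le C\|T_n\|\,n^{\sum_{j=1}^l 1/p_j}\,n^{\sum_{j=l+1}^m 1/q_{Z_j}}$ produces exactly $1/r\le\max(1/q_W-\beta,0)+\beta+\sum_{j=l+1}^m 1/q_{Z_j}$.

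For part (2), where $W$ is finite-dimensional (so Maurey--Pisier in $W$ is only available up to $n=\dim W$), I instead fix a norm-one $w_0\in W$ and use the rank-one-valued map $T_n=\varphi_n\,w_0$, where $\varphi_n$ is the scalar model obtained by deleting $w_i$ from the formula above. The diagonal values are again bounded below, and now $\|T_n\|=\|\varphi_n\|\le 2^{m-l}\sup\|b\|_1\le 2^{m-l}n^{\max(0,\,1-\beta)}$, which is the previous estimate with $\ell_{q_W}$ replaced by $\ell_1$, i.e. with $1/q_W$ replaced by $1$. The same chain of inequalities yields $1/r\le\max(1-\beta,0)+\beta+\sum_{j=l+1}^m 1/q_{Z_j}$. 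The main obstacle, and the only genuinely computational step, is the operator-norm estimate: one must see that $\sup\|b\|_{q_W}$ splits into the two regimes $\beta\le 1/q_W$ and $\beta>1/q_W$, thereby producing the $\max$ term, and one must recognize that the sole structural difference between the two parts is whether the target contributes its cotype exponent $1/q_W$ (via finite representability of $\ell_{q_W}$ in the infinite-dimensional $W$) or only the trivial exponent $1$ (via a one-dimensional range).
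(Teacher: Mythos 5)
Your proof is correct and follows essentially the same route as the paper: Maurey--Pisier vectors with biorthogonal functionals in each infinite-dimensional factor, the diagonal test operator $T_n$, the H\"older estimate $\|T_n\|\lesssim n^{\max(0,1/q_W-\beta)}$ (resp.\ $n^{\max(0,1-\beta)}$), and diagonal evaluation against unit vectors whose weak-$\ell_1$ norms give $n^{\beta+\sum_{j>l}1/q_{Z_j}}$. The only cosmetic difference is in part (2), where the paper reduces to $W=\mathbb{K}$ while you tensor the scalar form with a fixed norm-one vector $w_0$; these are equivalent.
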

\begin{proof}
Let $n\geq 1$. We first assume that $W$ is infinite-dimensional. 
We know that $\ell_{q_{W}}$ is finitely representable in $W$ and
	we denote by $(w_{i})_{i=1}^{n}$ the associated sequence in $W$. Let
	$y(j)=\left(  y_{i}(j)\right)  _{i=1}^{n}$ and $y^{\ast}(j)=\left(
	y_{i}^{\ast}(j)\right)  _{i=1}^{n}$ be, as usual in this paper, the vectors in
	$Z_{j}$ given by the Maurey--Pisier Theorem, $j=l+1,\ldots,m$. Consider the
	$m$-linear operator $T$ defined on $\ell_{p_1}\times \cdots\times\ell_{p_l}\times Z_{l+1}\times\cdots\times Z_{m}$ by 
	$$T
	\left(  z{(1)},\ldots,z{(m)}\right)   =\sum_{i=1}^{n}{}z_{i}%
	{(1)}\cdots z_i(l)\langle y_{i}^{\ast}(l+1),z{(l+1)}\rangle\cdots\langle y_{i}^{\ast}%
	(m),z{(m)}\rangle w_{i}   .$$
	It satisfies
	\begin{align*}
	\Vert T\Vert &  =\sup_{\|z(i)\|\leq 1}\left\Vert \sum_{i=1}^{n}z_{i}%
	{(1)}\cdots z_i(l)\langle y_{i}^{\ast}(l+1),z{(l+1)}\rangle\cdots\langle y_{i}^{\ast}%
	(m),z{(m)}\rangle w_{i}\right\Vert
	\\
	&  \leq\sup_{\|z(i)\|\leq 1}\left(  \sum_{i=1}^{n}{}\left\vert z_{i}{(1)}\cdots z_i(l)\langle y_{i}^{\ast
	}(l+1),z{(l+1)}\rangle\cdots\langle y_{i}^{\ast}(m),z{(m)}\rangle\right\vert
	^{q_{W}}\right)  ^{1/q_{W}}.
	\end{align*}
	We then apply Holder's inequality to get that 
	$$\|T\|\leq n^{ \max\left(\frac 1{q_W}-\sum_{j=1}^l \frac1{p_j},0\right)}2^{m-l}.$$
	On the other hand, we also have 
		\begin{align*}
	\frac{1}{2}n^{1/r}  &  \leq\left(  \sum\limits_{i=1}^{n}\left\Vert
	w_{i}\right\Vert ^{r}\right)  ^{1/r}=\left(  \sum\limits_{i=1}^{n}\left\Vert
	T\left(  e_i,\dots,e_i,y_{i}(l+1),\ldots,y_{i}(m)\right)  \right\Vert ^{r}\right)  ^{1/r}\\
	&\leq
	C\left\Vert T\right\Vert \omega_1(e_i)\cdots\omega_1(e_i) \omega_{1}\left(  y(l+1)\right)  \cdots \omega_{1}\left(
	y(m)\right) \\
	&  \leq C\|T\|n^{\sum_{j=1}^l \frac 1{p_j}+\sum_{j=l+1}^m \frac{1}{q_{Z_j}}}.
	\end{align*}
	Since $n$ is arbitrary, we get the desired inequality.
	
	If $W$ is finite-dimensional, we may assume that $W=\mathbb K$ and we now set 
$$T
	\left(  z{(1)},\ldots,z{(m)}\right)   =\sum_{i=1}^{n}{}z_{i}%
	{(1)}\cdots z_i(l)\langle y_{i}^{\ast}(l+1),z{(l+1)}\rangle\cdots\langle y_{i}^{\ast}%
	(m),z{(m)}{\rangle.}$$
	The proof is completely similar, except that now we have 
		$$\|T\|\leq n^{ \max\left( 1-\sum_{j=1}^l \frac1{p_j},0\right)}2^{m-l}.$$
\end{proof}
Combining the two previous results, we get several corollaries:
\begin{corollary}
	Let $m\geq1$ and $W$ be an infinite dimensional Banach space. Then for any
	infinite dimensional Banach spaces $Z_{j}$ with cotype $q_{Z_{j}},$ for all
	$j=2,...,m,$ we have
	\[
	\mathcal{L}(^{m}\ell_{2},Z_{2},\ldots,Z_{m};W)=\Pi_{r,1}^{abs}(^{m}\ell
	_{2},Z_{2},\ldots,Z_{m};W)\Leftrightarrow\frac{1}{r}\leq\frac{1}{2}+%
	%TCIMACRO{\dsum \limits_{j=2}^{m}}%
	%BeginExpansion
	{\displaystyle\sum\limits_{j=2}^{m}}
	%EndExpansion
	\frac{1}{q_{Z_{j}}}.
	\]
\end{corollary}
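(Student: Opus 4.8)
The plan is to read off both implications from the two results just established, namely the sufficiency in Lemma~\ref{lem:botelho} and the necessity in Theorem~\ref{thm:absolute}, after matching the present situation to their hypotheses. The key structural observation is that we are in the framework of Theorem~\ref{thm:absolute} with $l=1$ and $p_1=2$, so that the single $\ell_p$-factor is $\ell_2$ and $Z_2,\dots,Z_m$ play the role of the infinite-dimensional spaces $Z_{l+1},\dots,Z_m$. Two elementary facts will do all the work: $\ell_2$ is a Hilbert space and hence has cotype $2$, and every infinite-dimensional Banach space $W$ satisfies $q_W\geq 2$.

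For the implication $\Leftarrow$, I would assume $\frac1r\leq\frac12+\sum_{j=2}^m\frac1{q_{Z_j}}$ and apply Lemma~\ref{lem:botelho}(1) to the $m$-linear maps on $\ell_2\times Z_2\times\cdots\times Z_m$, taking $Z_1=\ell_2$ with cotype $q_1=2$ and each $Z_j$ with cotype $q_{Z_j}$. The sufficient condition $\frac1r\leq\sum_{j=1}^m\frac1{q_j}$ of that lemma becomes exactly $\frac1r\leq\frac12+\sum_{j=2}^m\frac1{q_{Z_j}}$, which yields the coincidence $\mathcal L(^m\ell_2,Z_2,\dots,Z_m;W)=\Pi_{r,1}^{abs}(^m\ell_2,Z_2,\dots,Z_m;W)$.

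For the implication $\Rightarrow$, I would assume the coincidence holds. Since $W$ is infinite-dimensional, Theorem~\ref{thm:absolute}(1) with $l=1$ and $p_1=2$ gives
\[
\frac1r\leq\max\!\left(\frac1{q_W}-\frac12,\,0\right)+\frac12+\sum_{j=2}^m\frac1{q_{Z_j}}.
\]
The only point needing a word is the $\max$-term: because cotype exponents are always at least $2$, one has $q_W\geq 2$, hence $\frac1{q_W}\leq\frac12$ and the maximum equals $0$. The inequality then collapses to $\frac1r\leq\frac12+\sum_{j=2}^m\frac1{q_{Z_j}}$, the asserted necessary condition.

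Combining the two directions gives the stated equivalence. I do not foresee a genuine obstacle here: the entire content is the bookkeeping that identifies $\ell_2$ as a cotype-$2$ factor together with the observation $q_W\geq 2$ that annihilates the extra $\max$-term in Theorem~\ref{thm:absolute}. The case $m=1$ is covered as well, the sum over $j$ being empty and both bounds reducing to $\frac1r\leq\frac12$.
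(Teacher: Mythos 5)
Your proof is correct and is exactly the argument the paper intends: the paper gives no explicit proof, stating only that the corollary follows by "combining the two previous results," and your write-up is precisely that combination — Lemma~\ref{lem:botelho}(1) with $\ell_2$ counted as a cotype-$2$ factor for sufficiency, and Theorem~\ref{thm:absolute}(1) with $l=1$, $p_1=2$ together with $q_W\geq 2$ (so the $\max$-term vanishes) for necessity. No gaps.
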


\begin{corollary}
Let $m\geq p\geq 2$. Then $\mathcal L(^m \ell_p;\mathbb K)=\Pi_{r,1}^{abs}(^m\ell_p;\mathbb K)$ if and only if $r\geq \frac{p}m$. 
\end{corollary}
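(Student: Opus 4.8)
The plan is to read off both implications directly from the two main results of this subsection: Botelho's Lemma~\ref{lem:botelho} for sufficiency and Theorem~\ref{thm:absolute} for necessity. The only structural fact I need to record first is that, for $p\geq 2$, the space $\ell_p$ has cotype $p$ (and no better), so that $q_{\ell_p}=p$.

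For the implication $r\geq p/m\Rightarrow$ coincidence, I would apply part (1) of Lemma~\ref{lem:botelho} with $Z_1=\cdots=Z_m=\ell_p$ and $q_1=\cdots=q_m=p$. The hypothesis $\frac1r\leq\sum_{j=1}^m\frac1{q_j}=\frac mp$ is exactly $r\geq p/m$, and the lemma then yields $\mathcal L(^m\ell_p;\mathbb K)=\Pi_{r,1}^{abs}(^m\ell_p;\mathbb K)$.

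For the converse, I would invoke Theorem~\ref{thm:absolute} in its finite-dimensional target case, part (2), taking $W=\mathbb K$, $l=m$ (so that the list $Z_{l+1},\dots,Z_m$ is empty) and $p_1=\cdots=p_m=p$. Assuming the coincidence holds, the theorem gives
$$\frac1r\leq\max\Big(1-\sum_{j=1}^m\tfrac1p,\,0\Big)+\sum_{j=1}^m\tfrac1p=\max\Big(1-\tfrac mp,\,0\Big)+\tfrac mp.$$
The point where the hypothesis $m\geq p$ enters is that $\frac mp\geq 1$, whence $1-\frac mp\leq 0$ and the maximum collapses to $0$; thus $\frac1r\leq\frac mp$, i.e. $r\geq p/m$, as required.

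I do not expect a serious obstacle here: the corollary is a clean specialization of the general machinery, and the only thing to verify is that the $\max$ term in Theorem~\ref{thm:absolute}(2) vanishes, which is precisely what $m\geq p$ guarantees. The one modelling choice worth flagging is that selecting $l=m$ (feeding each $\ell_p$ through an $\ell_{p_j}$ slot) rather than $l=0$ (feeding them through the cotype slots $Z_j$) is essential: the latter would only produce the strictly weaker bound $\frac1r\leq 1+\frac mp$, which does not match the sharp threshold $p/m$.
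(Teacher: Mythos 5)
Your proposal is correct and is exactly the argument the paper intends: the corollary is stated immediately after the sentence ``Combining the two previous results, we get several corollaries,'' with sufficiency coming from Lemma~\ref{lem:botelho}(1) with $q_1=\cdots=q_m=q_{\ell_p}=p$, and necessity from Theorem~\ref{thm:absolute}(2) with $W=\mathbb K$, $l=m$, $p_1=\cdots=p_m=p$, where $m\geq p$ kills the $\max$ term. Your remark that choosing $l=m$ rather than $l=0$ is what yields the sharp bound is also the right reading of the theorem.
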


An important result in the theory of absolutely summing multilinear operators
is the Defant-Voigt result (\cite{AM}) which says that $\mathcal{L}(^{m}%
X_{1},\dots,X_{m};\mathbb{K})=\Pi_{r,1}^{abs}(X_{1},\dots,X_{m};\mathbb{K})$
for all Banach spaces $X_{1},\dots,X_{m}$ and all $r\geq1$. In \cite{araujo},
it was observed that this is optimal in the following sense: $\mathcal{L}(^{m}
c_{0},\dots,c_{0};\mathbb{K})=\Pi_{r,1}^{abs}(c_{0},\dots,c_{0};\mathbb{K})$
if and only if $r\geq1$. We can extend this to spaces with no finite cotype.

\begin{corollary}
	\label{jjj}Let $m\geq1$, let $Z$ be a Banach space with no finite cotype.
	Then
	\begin{equation*}
	\mathcal{L}(^{m}Z,...,Z;\mathbb{K})=\Pi_{r,1}^{abs}(^{m}Z,...,Z;\mathbb{K})
	\label{oooo}%
	\end{equation*}
	if and only if $r\geq1.$ Moreover, the result is sharp in the following
	sense: if $Z$ is any Banach space with finite cotype $q$, then for any $0<r<1$
	\[
	\mathcal{L}(^{m}Z,...,Z;\mathbb{K})=\Pi_{r,1}^{abs}(^{m}Z,...,Z;\mathbb{K})
	\]
	for all $m\geq q/r.$
\end{corollary}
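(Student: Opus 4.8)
The plan is to obtain the whole statement as a specialization of results already established in this section together with the Defant--Voigt theorem recalled above. The assertion decomposes naturally into the equivalence (valid when $Z$ has no finite cotype) and the sharpness claim (valid when $Z$ has finite cotype $q$), and each piece reduces to a single quantitative inequality.

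First I would dispatch the sufficiency direction of the equivalence: for any $r\geq 1$ the coincidence $\mathcal L(^mZ,\dots,Z;\mathbb K)=\Pi_{r,1}^{abs}(^mZ,\dots,Z;\mathbb K)$ holds by the Defant--Voigt theorem \cite{AM}, which is valid for arbitrary Banach spaces, so nothing beyond a citation is needed here. For the necessity direction, I would apply Theorem \ref{thm:absolute}(2) in its degenerate form $l=0$, with $Z_1=\cdots=Z_m=Z$ and scalar target $W=\mathbb K$ (note that $Z$ is infinite-dimensional, since having no finite cotype forces this). The inequality produced by that theorem reads $\frac1r\leq 1+\sum_{j=1}^m\frac1{q_{Z_j}}$; since ``no finite cotype'' means exactly $q_Z=\infty$, every term $1/q_{Z_j}$ vanishes and the bound collapses to $\frac1r\leq 1$, that is, $r\geq 1$. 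This closes the equivalence.

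For the sharpness statement I would invoke Lemma \ref{lem:botelho}(1) with all factors equal to $Z$, so that every cotype $q_j$ equals $q$. Its sufficient condition $\frac1r\leq\sum_{j=1}^m\frac1{q_j}=\frac mq$ is precisely the requirement $m\geq q/r$, and under it the lemma yields the coincidence for the prescribed $r\in(0,1)$ and all $m\geq q/r$, as claimed. This exhibits that the ``no finite cotype'' hypothesis in the first part is essential: with finite cotype one can descend below $r=1$ by enlarging $m$, whereas without it $r\geq 1$ is forced no matter how large $m$ is.

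I do not anticipate a genuine obstacle here, as the corollary is a bookkeeping exercise built on top of Theorem \ref{thm:absolute} and Lemma \ref{lem:botelho}. The only delicate point is the passage to the limiting value $q_Z=\infty$ in Theorem \ref{thm:absolute}(2); but that theorem is stated for all infinite-dimensional $Z$, and interpreting $1/q_{Z_j}=0$ when $Z_j$ has no finite cotype makes the stated inequality apply verbatim, so no separate argument is required.
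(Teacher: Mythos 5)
Your proof is correct and follows exactly the route the paper intends: the corollary is stated there as a direct combination of the Defant--Voigt theorem (sufficiency), Theorem \ref{thm:absolute}(2) with $l=0$ and $W=\mathbb{K}$ (necessity, with $1/q_{Z_j}=0$ in the no-finite-cotype case), and Lemma \ref{lem:botelho}(1) (sharpness). Your explicit remark that ``no finite cotype'' forces $Z$ to be infinite-dimensional and that the Maurey--Pisier theorem still applies with $q_Z=\infty$ correctly handles the only delicate point.
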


\subsection{Cotype and absolute summability of polynomials }

Let $\mathcal P(^mX;Y)$ denote the space of all continuous $m$-homogeneous polynomials between the Banach spaces $X$ and $Y$, endowed with the usual sup norm.
We recall that $P\in \mathcal P(^mX;Y)$ is absolutely
$\left(  r,s\right)  $-summing (in symbols $P\in \mathcal P_{as(r,s)}(^mX;Y)$) if the sequence $\left(  P(x_{j}\right)  )_{j=1}^\infty$
belongs to $\ell_{r}(Y)$ whenever $(x_{j})_{j=1}^\infty$ is weakly $s$-summable in $X$ (see
\cite{AM}). In \cite[Theorem 3.1]{BPR10} it is proved that if $m$ is an even
integer, if $Z$ is an infinite dimensional real Banach space and if $r<1$,
then the coincidence  $\mathcal{P}\left(  ^{m}Z;\mathbb{R}\right)
=\mathcal{P}_{as(r,s)}\left(  ^{m}Z;\mathbb{R}\right)  $  implies that $Id_{Z}$
is $\left(  \frac{mr}{1-r},s\right)  $-summing. A careful examination of
\cite[Theorem 3.1]{BPR10} shows that the argument of \cite{BPR10} cannot be
extended to the case of odd integers and complex scalars. Our method allows us
to provide a proof working in all cases when the space assumes the infimum of its cotypes.

\begin{theorem}
	Let $m\geq1$, let $Z$ be an infinite dimensional Banach space and let
	$r\in(0,1)$. If
	\begin{equation}
	\mathcal{P}\left(  ^{m}Z;\mathbb{K}\right)  =\mathcal{P}_{as(r,s)}\left(
	^{m}Z;\mathbb{K}\right)  ,\label{est11}%
	\end{equation}
	then
	\[
	q_{Z}\leq\frac{mrs^{\ast}}{s^{\ast}(1-r)+mr}.
	\]
	
\end{theorem}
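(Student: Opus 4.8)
The plan is to show that a too-large value of $q_Z$ would force the coincidence (\ref{est11}) to produce an absolutely summing inequality whose constant blows up with the dimension, which is impossible. First I would record, via the open mapping (or closed graph) theorem, that (\ref{est11}) yields a \emph{single} constant $C>0$ with
$$\left(\sum_j |P(x_j)|^r\right)^{1/r}\le C\,\|P\|\,\omega_s\big((x_j)_j\big)^m$$
for every $P\in\mathcal P(^mZ;\mathbb K)$ and every weakly $s$-summable sequence $(x_j)_j$ in $Z$; the exponent $m$ on $\omega_s$ is forced by the $m$-homogeneity of $P$, and the estimate $\|P\|\le\pi_{as(r,s)}(P)$ (testing on one-term sequences), needed to see that the inclusion of the two Banach spaces is continuous, makes the identity map bounded in the reverse direction.

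Next, fixing $n\ge 1$, I would invoke the Maurey--Pisier machinery set up just before the Lemma: let $Z_n$, $S_n$, $(y_i)_{i=1}^n$ and $(y_i^*)_{i=1}^n$ be as there, so that $\|y_i^*\|\le 2$ and, by biorthogonality on $Z_n$, $\langle y_i^*,y_j\rangle=\delta_{ij}$. The central object is the continuous $m$-homogeneous polynomial
$$P(z)=\sum_{i=1}^n \langle y_i^*,z\rangle^m .$$
Two estimates drive the argument. Since each $|\langle y_i^*,z\rangle|\le 2\|z\|$, summing $n$ terms gives the crude but sufficient bound $\|P\|\le 2^m n$. Meanwhile biorthogonality gives $P(y_j)=\sum_i\delta_{ij}=1$ for every $j\le n$, so testing the displayed inequality on $(y_j)_{j=1}^n$ puts $n^{1/r}$ on the left.

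Then I would bound the right-hand side by the Lemma above: when $s^*\ge q_Z$ one has $\omega_s\big((y_j)_{j=1}^n\big)\le n^{1/q_Z-1/s^*}$, whence
$$n^{1/r}\le C\,2^m\,n^{\,1+m(1/q_Z-1/s^*)}.$$
Letting $n\to\infty$ forces $\tfrac1r\le 1+m\big(\tfrac1{q_Z}-\tfrac1{s^*}\big)$, and isolating $q_Z$ in this inequality yields exactly $q_Z\le \tfrac{mrs^*}{s^*(1-r)+mr}$. The one point needing care — which I expect to be the main (and essentially only) obstacle — is the hypothesis $s^*\ge q_Z$ required by the Lemma: if instead $s^*<q_Z$, then $\big(y_j\big)$ behaves like the $\ell_{q_Z}^n$-basis and $\omega_s\big((y_j)_{j=1}^n\big)$ stays bounded in $n$, so the same chain gives $n^{1/r}\le C\,2^m n$, i.e. $r\ge 1$, contradicting $r\in(0,1)$; hence this case cannot occur under (\ref{est11}) and the Lemma always applies. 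Beyond this case distinction the computation is a routine adaptation of the scheme already used in the proofs of Theorems~\ref{thm:optimultiple} and \ref{thm:absolute}.
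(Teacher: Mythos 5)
Your proposal is correct and follows essentially the same route as the paper's own proof: the same Maurey--Pisier polynomial $P(z)=\sum_{i=1}^n\langle y_i^*,z\rangle^m$ with $\|P\|\le 2^m n$ and $P(y_j)=1$, the same case analysis ruling out $s^*\le q_Z$ by deriving $r\ge 1$, and the same final estimate $n^{1/r}\le C\,2^m\,n^{1+m(1/q_Z-1/s^*)}$ rearranged into the stated bound on $q_Z$. The only (harmless) cosmetic difference is that you make explicit the closed-graph argument producing the uniform constant $C$, which the paper leaves implicit.
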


\begin{proof} Let $n\geq 1$. 
	We still use the same notations for the finite representation of $\ell_{q_{Z}%
	}$ into $Z$. If $s^{\ast}\leq q_{Z}$ it is simple to verify that%
	\begin{equation}
	\omega_{s}\left(  (y_i)_{i=1}^n\right)  \leq1.\label{est0}%
	\end{equation}
	We set now
	\begin{align*}
	P:Z &  \rightarrow\mathbb{K}\\
	z &  \mapsto\sum_{i=1}^{n}{}\langle y_{i}^{\ast},z\rangle^{m}%
	\end{align*}
	which satisfies
	\[
	\Vert P\Vert\leq2^{m}n.
	\]

	From (\ref{est0}) we conclude that (\ref{est11}) implies $s^{\ast}>q_{Z}.$ In
	fact, if $s^{\ast}\leq q_{Z}$ we have
	\begin{align*}
	n^{1/r}  & \leq\left(  \sum\limits_{i=1}^{n}\left\vert P\left(  y_i\right)
	\right\vert ^{r}\right)  ^{1/r}\leq C\left\Vert P\right\Vert 
	\omega_{s}\left( ( y_i)_{i=1}^n\right) ^{m}\\
	& \leq C  2^{m}n
	\end{align*}
	and this would imply $r\geq1$, a contradiction. Therefore $s^{\ast}>q_{Z}$
	and
	\begin{align*}
	n^{1/r} &  \leq\left(  \sum\limits_{i=1}^{n}\left\vert P\left(  y(i)\right)
	\right\vert ^{r}\right)  ^{1/r}\leq C\left\Vert P\right\Vert\omega_{s}\left( ( y_i)_{i=1}^n\right)  ^{m}\\
	&  \leq C\left(  2^{m}n\right)  n^{m\left(  \frac{1}{q_{Z}}-\frac{1}{s^{\ast}%
		}\right)  }.
	\end{align*}
	Since $n$ is arbitrary,
	\[
	q_{Z}\leq\frac{mrs^{\ast}}{s^{\ast}(1-r)+mr}.
	\]
	
\end{proof}

\begin{corollary}
	Let $m\geq1$, let $Z$ be an infinite dimensional Banach space that has cotype $q_z$ and let
	$r\in(0,1)$. If
	\[
	\mathcal{P}\left(  ^{m}Z;\mathbb{K}\right)  =\mathcal{P}_{as(r,s)}\left(
	^{m}Z;\mathbb{K}\right),
	\]
then $Id_{Z}$ is $\left(  \frac{mr}{1-r},s\right)
	$-summing.
\end{corollary}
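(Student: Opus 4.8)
The plan is to deduce the corollary from the preceding theorem together with two ingredients: the fact that a space which \emph{attains} its cotype $q_Z$ has $(q_Z,1)$-summing identity, and the inclusion theorem for absolutely summing operators.

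First I would invoke the preceding theorem. Under the coincidence hypothesis \eqref{est11} it gives $q_Z\le \frac{mrs^{\ast}}{s^{\ast}(1-r)+mr}$, which, writing $t:=\frac{mr}{1-r}$, rearranges to
\[ \frac1t\le \frac1{q_Z}-\frac1{s^{\ast}}. \]
In particular the right-hand side is strictly positive, so $s^{\ast}>q_Z$, and $t\ge t_0$ where $\frac1{t_0}:=\frac1{q_Z}-\frac1{s^{\ast}}$. Since the $\ell_t$-norm is dominated by the $\ell_{t_0}$-norm when $t_0\le t$, being $(t_0,s)$-summing implies being $(t,s)$-summing; hence it suffices to prove that $Id_Z$ is $(t_0,s)$-summing.

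Second, I would establish the bridge $Id_Z\in\Pi_{q_Z,1}(Z)$, which is exactly where the attainment of cotype $q_Z$ enters (the theorem itself only used finite representability of $\ell_{q_Z}$, not the cotype inequality). The key elementary observation is
\[ \omega_1\big((x_j)_j\big)=\sup_{|\theta_j|\le1}\Big\|\sum_j\theta_j x_j\Big\|, \]
from which $\mathbb E_{\varepsilon}\big\|\sum_j\varepsilon_j x_j\big\|\le \omega_1\big((x_j)_j\big)$ for Rademacher signs $\varepsilon_j$, and hence by Kahane's inequality $\big(\mathbb E_{\varepsilon}\|\sum_j\varepsilon_j x_j\|^2\big)^{1/2}\le K\,\omega_1\big((x_j)_j\big)$. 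Feeding this into the cotype-$q_Z$ inequality $\big(\sum_j\|x_j\|^{q_Z}\big)^{1/q_Z}\le C\big(\mathbb E_{\varepsilon}\|\sum_j\varepsilon_j x_j\|^2\big)^{1/2}$ yields $\big(\sum_j\|x_j\|^{q_Z}\big)^{1/q_Z}\le CK\,\omega_1\big((x_j)_j\big)$, that is, $Id_Z$ is $(q_Z,1)$-summing.

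Finally I would apply the inclusion theorem for summing operators (see \cite{DJT}): since $1\le s$, $q_Z\le t_0$, and the pairs $(q_Z,1)$ and $(t_0,s)$ have equal defect, $1-\frac1{q_Z}=\frac1s-\frac1{t_0}$ (a mere rewriting of $\frac1{t_0}=\frac1{q_Z}-\frac1{s^{\ast}}$), we obtain $\Pi_{q_Z,1}(Z)\subseteq\Pi_{t_0,s}(Z)$. Combined with $\Pi_{t_0,s}(Z)\subseteq\Pi_{t,s}(Z)$ this gives $Id_Z\in\Pi_{t,s}(Z)$, i.e. $Id_Z$ is $\big(\frac{mr}{1-r},s\big)$-summing, as desired. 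I expect the only substantial point to be the bridge step $Id_Z\in\Pi_{q_Z,1}(Z)$, and within it the realization that the Rademacher average is controlled by $\omega_1$; the remaining passages are the routine monotonicity in the first index and the standard inclusion theorem.
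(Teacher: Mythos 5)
Your proof is correct and follows essentially the same route as the paper's: the bound on $q_Z$ from the preceding theorem, the standard fact that a space having cotype $q_Z$ has $(q_Z,1)$-summing identity, and the Inclusion Theorem \cite[Theorem 10.4]{DJT}. The paper compresses the first two ingredients into a single sentence (and its stated conclusion $\left(\frac{mr}{1-s},s\right)$ is a typo for $\left(\frac{mr}{1-r},s\right)$); your version merely makes the cotype-to-summability bridge and the index arithmetic explicit, which is exactly where the corollary's extra hypothesis that $Z$ attains the infimum of its cotypes is used.
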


\begin{proof}
By the previous theorem we know that $Id_{Z}$ is $\left(  \frac{mrs^{\ast}%
}{s^{\ast}\left(  1-r\right)  +mr},1\right)  $-summing, and by the Inclusion
Theorem for summing operators (see \cite[Theorem 10.4]{DJT}) we conclude
that it is $\left(  \frac{mr}{1-s},s\right)  $-summing.

\end{proof}

\providecommand{\bysame}{\leavevmode\hbox to3em{\hrulefill}\thinspace}
\providecommand{\MR}{\relax\ifhmode\unskip\space\fi MR }
% \MRhref is called by the amsart/book/proc definition of \MR.
\providecommand{\MRhref}[2]{%
  \href{http://www.ams.org/mathscinet-getitem?mr=#1}{#2}
}
\providecommand{\href}[2]{#2}

\bibliographystyle{amsplain} 
\providecommand{\bysame}{\leavevmode\hbox to3em{\hrulefill}\thinspace}
\providecommand{\MR}{\relax\ifhmode\unskip\space\fi MR }
% \MRhref is called by the amsart/book/proc definition of \MR.
\providecommand{\MRhref}[2]{%
	\href{http://www.ams.org/mathscinet-getitem?mr=#1}{#2}
}
\providecommand{\href}[2]{#2}

\end{document}